\documentclass[twoside]{irmaems}
\usepackage{amssymb} %use \usepackage[psamsfonts]{amssymb} if your fonts are from Y&Y/Blue Sky Research
\usepackage{amsmath} %use \usepackage{amsmath,cmmib57} if your fonts are from Y&Y/Blue Sky Research
\usepackage{latexsym}
\usepackage{enumerate}
\usepackage{labelfig}
\usepackage{amssymb}
\usepackage[cp850]{inputenc}
\usepackage{epsfig}
\usepackage{epstopdf}
\usepackage[mathscr]{eucal}

\setcounter{page}{1}

\theoremstyle{definition} %%% for statements in roman typeface

 \newtheorem{definition}{Definition}[section]
 \newtheorem{remark}[definition]{Remark}

  %%% for statements without numbering

\theoremstyle{plain}      %%% for statements in italic typeface

 \newtheorem{proposition}[definition]{Proposition}
 \newtheorem{theorem}[definition]{Theorem}
 \newtheorem{corollary}[definition]{Corollary}
 \newtheorem{lemma}[definition]{Lemma}
 \newtheorem{property}[definition]{Property}
 \newtheorem{question}[definition]{Question}

\newtheorem*{conjecture}{Conjecture}

%%%%%%%%%% Please use \begin{align}...\end{align} resp. \begin{align*}...\end{align*}
%%%%%%%%%% insted of \begin{eqnarray}...\end{eqnarray} resp. \begin{eqnarray*}...\end{eqnarray*}
%%%%%%%%%% for better spacing

\newcommand{\arcsinh}{\operatorname{arcsinh}}

\newcommand{\R}{{\mathbb R}}

\newcommand{\Hyp}{{\mathbb H}}

\newcommand{\Z}{{\mathbb Z}}

\newcommand{\BB}{{\mathcal B}}

\newcommand{\T}{{\mathcal T}}
\newcommand{\PP}{{\mathcal P}}

\newcommand{\Q}{{\mathcal Q}}

\newcommand{\PSL}{{\rm PSL}}
\newcommand{\equal}{{\mathcal E}}

\newcommand{\area}{\mathrm{area}}
\newcommand{\sys}{{\rm sys}\,}

\markboth{Hugo Parlier}{Simple closed geodesics and the study of Teichm\"uller spaces}

\begin{document}

\title{Simple closed geodesics and the study of Teichm\"uller spaces}

\author{Hugo Parlier 
%\thanks{Work partially supported by SNF Grant No.~xx-65213.yy}
}

\address{Department of Mathematics, University of Toronto, Canada\\
}

\maketitle

%\begin{abstract}
%Simple closed geodesics 
%\end{abstract}

%\begin{classification}
%58D05, 58F07; 35Q53.
%\end{classification}

%\begin{keywords}
%Teichm\"uller spaces, simple closed geodesics.
%\end{keywords}

\tableofcontents   

\section{Introduction}\label{sect:intro}

The goal of the chapter is to present certain aspects of the relationship between the study of simple closed geodesics and Teichm\"uller spaces. The set of simple closed geodesics is more than a mere curiosity and has been central in the study of surfaces for quite some time: it was already known to Fricke that a carefully chosen finite subset of such curves could be used as local coordinates for the space of surfaces. Since then, the literature on the subject has been vast and varied. Recent results include generalizations of McShane's Identity \cite{mc98,mi071,mi072} and results on how to use series based on lengths of simple closed geodesics to find invariant functions over Teichm\"uller space to calculate volumes of moduli spaces. Questions surrounding multiplicity in the simple length spectrum are sometimes related to questions in number theory \cite{ha86,se85}. In a somewhat different direction, and although this theme will not be treated here, a related subject is the combinatorics of simple closed curves. The geometry of the curve complex \cite{chrase08,masmin99,ra05,ra07,sehand} and the pants complex \cite{br03} have played an important role in the study of the large scale geometry of Teichm\"uller spaces with its different metrics and the study of hyperbolic $3$-manifolds.\\

It should be noted that this chapter should not in any way be considered a survey of all relationships between Teichm\"uller space and simple closed geodesics, but just a presentation of certain aspects the author is familiar with. Specifically, this chapter will concentrate on two themes.\\

The first theme is the study of the set of simple closed geodesics in contrast with the set of closed geodesics. There are a number of ways in which these sets differ and these illustrate the special nature of simple closed geodesics. Three subjects of contrast are exposed here. The first concerns results related to the non-density of simple closed geodesics and in particular the Birman-Series theorem \cite{bise85}. The second subject is about the contrast in growth of the number of simple closed geodesics in comparison with closed geodesics, and in particular Mirzakhani's theorem \cite{mi08}. The third subject concerns how multiplicity differs in the full length spectrum in comparison with the simple length spectrum.\\

The second theme is on systoles, their lengths, and other related quantities such as the lengths of pants decompositions. For systolic constants, many of the known results are due to Schmutz Schaller who wrote a survey article on the subject \cite{sc98}, and so the information provided is intended to somewhat complement his exposition. Bers' constants, introduced by Bers \cite{be74,be85} are upper bounds on lengths of a shortest pants decomposition of a surface, and have been extensively studied by Buser, who proves a number of bounds on these \cite{buhab,bubook}. Here again, the information provided should be seen as a complement of \cite{bubook}. There are a certain number of parallels and similarities between the two problems, and one of the goals is to illustrate this.

\section{Generalities}\label{sec:gen}

Consider Teichm\"uller space $\T_{g,n}$, the set of marked finite area hyperbolic metrics on an orientable surface of genus $g$ with $n$ marked points. Surfaces with a non-empty set of cusps will called punctured surfaces. On occasion, we will talk about non-complete surfaces with boundary geodesics or cusps instead of just cusps, and they will be called surfaces with holes.\\

A free homotopy class of a closed curve is called non-trivial if it is not homotopic to a disk or to a puncture. It is called simple if it can be represented by a simple closed curve. We begin with the following essential property.

\begin{property}\label{pro:unique} A non-trivial closed curve is freely homotopic to a unique closed geodesic. If the closed curve is simple then so is the freely homotopic closed geodesic.
\end{property}

One way of seeing this is by considering the lifts of a non-trivial curve to the universal cover $\Hyp$. The lifts are all disjoint simple curves between boundary points of $\Hyp$. The geodesics between the same boundary points are invariant by the Fuchsian group, and the quotient is the desired simple closed geodesic. The same argument shows that the closed geodesic in the free homotopy class of a closed curve always minimizes its self-intersection number. Simple closed geodesics will be generally thought of as primitive and unoriented.\\

The above property allows us to associate to a homotopy class $[\alpha]$ a length function, which to a surface $M\in \T$ associates the length $\ell_M(\alpha)$ of the unique geodesic in the homotopy class of $\alpha$. A fundamental property of these length functions is that they are analytic, with respect to the usual analytic structure on Teichm\"uller space \cite{abbook}. For a given surface $M$, we shall denote $\Delta(M)$ the marked set of lengths of closed geodesics. By the above property, this is a countable set of values. The simple length spectrum $\Delta_0(M)$ is the subset of $\Delta(M)$ restricted to simple closed geodesics.\\

Simple closed geodesics provide useful parameters for Teichm\"uller space via pants decompositions. A pants decomposition is a collection of disjoint simple closed geodesics of the surface such that the complementary region is a collection of three holed spheres (pants). By a simple topological argument one sees that there are exactly $3g-3+n$ curves in a pants decomposition, and this number is sometimes referred to as the complexity of the surface. This provides us with $3g-3+n$ length functions, which although they do not determine the surface, they determine the geometry on the complement of the pants decomposition geodesics. To determine a marked surface, one uses a twist parameter to determine how the pants curves are glued together. Generally one measures twist by looking at perpendicular geodesic arcs on pants between distinct boundary curves. By cutting along these, one obtains a pair of symmetric hyperbolic right-angled hexagons. Twist is then measured by the displacement factor between two perpendicular arcs intersecting the boundary. The collection of lengths and twists are called the Fenchel-Nielsen coordinates.\\

A second consequence of the decomposition of a hyperbolic surface into hexagons is the well known collar lemma (see \cite{bu78,bubook,kee74,ra79} for different versions).

\begin{lemma}[Keen's Collar Lemma]\label{lem:collar}
Around a simple closed geodesic $\gamma$ there is always an embedded hyperbolic cylinder (called a collar) of width 
$$
w(\gamma)=\arcsinh\left(\frac{1}{\sinh\frac{\ell(\gamma)}{2}}\right).
$$
Furthermore, the collars around pants decomposition geodesics are all disjoint. 
\end{lemma}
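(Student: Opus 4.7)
The plan is to establish Keen's collar lemma via the decomposition of a pair of pants into right-angled hexagons. First, extend $\gamma$ to a pants decomposition of $M$ and consider a pair of pants $P$ having $\gamma$ as one of its boundary components, with the other two boundaries $\alpha$ and $\beta$ of lengths $\ell_\alpha$ and $\ell_\beta$ (a cusp is treated as the degenerate case $\ell=0$, where the corresponding orthogeodesic distance becomes infinite and the argument on that side is vacuous). The three orthogeodesics joining pairs of boundaries of $P$ cut it into two congruent right-angled hexagons of side-lengths $\ell_\gamma/2$, $d_{\gamma\alpha}$, $\ell_\alpha/2$, $d_{\alpha\beta}$, $\ell_\beta/2$, $d_{\gamma\beta}$ in cyclic order, where $d_{XY}$ denotes the orthogeodesic length between boundaries $X$ and $Y$.

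The second step applies the right-angled hexagon law of cosines, which yields
\[
\cosh(d_{\gamma\alpha}) = \frac{\cosh(\ell_\beta/2) + \cosh(\ell_\gamma/2)\cosh(\ell_\alpha/2)}{\sinh(\ell_\gamma/2)\sinh(\ell_\alpha/2)}.
\]
The key claim is $d_{\gamma\alpha} \geq w(\gamma)$; using $\cosh w(\gamma) = \coth(\ell_\gamma/2)$ (which follows from $\sinh w(\gamma) = 1/\sinh(\ell_\gamma/2)$), this is equivalent to $\cosh(d_{\gamma\alpha}) \geq \coth(\ell_\gamma/2)$. Substituting the hexagon formula and clearing denominators reduces the inequality to
\[
\cosh(\ell_\beta/2) \geq -\cosh(\ell_\gamma/2)\,e^{-\ell_\alpha/2},
\]
which is trivially true since the left side is positive and the right is negative. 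The symmetric bound $d_{\gamma\beta} \geq w(\gamma)$ follows by interchanging $\alpha$ and $\beta$, so the half-collar of width $w(\gamma)$ on the $P$-side of $\gamma$ lies inside $P$; gluing together the two half-collars across the pants adjacent to $\gamma$ yields the embedded collar in $M$.

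Finally, for the disjointness of collars around pants decomposition geodesics, apply the hexagon formula to two pants curves $\gamma_1,\gamma_2$ bounding a common pair of pants whose third boundary has length $\ell_3$:
\[
\cosh(d_{\gamma_1\gamma_2}) = \frac{\cosh(\ell_3/2) + \cosh(\ell_1/2)\cosh(\ell_2/2)}{\sinh(\ell_1/2)\sinh(\ell_2/2)} \geq \frac{1+\cosh(\ell_1/2)\cosh(\ell_2/2)}{\sinh(\ell_1/2)\sinh(\ell_2/2)} = \cosh\bigl(w(\gamma_1)+w(\gamma_2)\bigr),
\]
where the last equality uses the cosine-addition formula together with $\cosh w(\gamma_i) = \coth(\ell_i/2)$ and $\sinh w(\gamma_i) = 1/\sinh(\ell_i/2)$. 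Hence $d_{\gamma_1\gamma_2} \geq w(\gamma_1)+w(\gamma_2)$, so the corresponding collars are disjoint by the triangle inequality; for pants curves not sharing a pants the collars live in disjoint subsurfaces of the decomposition and disjointness is automatic. The principal technical ingredient is the hexagon law of cosines itself; once it is in hand, the required inequalities are elementary algebra.
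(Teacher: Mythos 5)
Your proposal is correct and follows essentially the same route as the paper: decompose the adjacent pairs of pants into right-angled hexagons with one side of length $\ell(\gamma)/2$ and use the hexagon trigonometry to show the adjacent perpendicular sides have length at least $w(\gamma)$, with the same estimate giving disjointness. You simply carry out in full the law-of-cosines computation that the paper leaves as a sketch, so the two arguments agree in substance.
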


The proof essentially follows from the above discussion and hyperbolic trigonometry (see \cite{beabook,bubook} for hyperbolic trigonometry formulas, and \cite{fenibook} for the original Fenchel and Nielsen approach). Indeed, on each side of a simple closed geodesic $\gamma$, one obtains two isometric right angled hexagons with one of their side lengths $\ell(\gamma)/2$. By hyperbolic trigonometry, the subsequent edges cannot have length less than $\arcsinh\left(\frac{1}{\sinh\frac{\ell(\gamma)}{2}}\right)$ and the result follows.\\

Although Fenchel-Nielsen coordinates are very useful, they do not provide a homogeneous set of parameters. However, if one allows more length functions of simple closed curves, one does obtain a complete local description of Teichm\"uller space.

\begin{theorem}\label{thm:proj} There is a fixed finite set of simple
closed geodesics $\gamma_1,\ldots,\gamma_m$ such that the map
$\varphi:$
$M\mapsto(\ell_M(\gamma_1),\ldots,\ell_M(\gamma_m))$ is
projectively injective on $\T_{g,n}$.
\end{theorem}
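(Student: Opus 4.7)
The plan is to start from a pants decomposition, adjoin a finite set of twist-detecting simple closed curves to obtain an \emph{injective} length map, and finally exploit a non-homogeneous hyperbolic trigonometric identity to upgrade injectivity to projective injectivity.

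First, fix a pants decomposition $\mathcal{P}=\{\alpha_1,\dots,\alpha_{3g-3+n}\}$ of $M$. The lengths $\ell_M(\alpha_i)$ already recover the ``length half'' of the Fenchel-Nielsen coordinates and completely determine the hyperbolic metric on the complement of $\mathcal{P}$. What remains is to read off the twists $\tau_i$. For each $\alpha_i$ I would adjoin two simple closed curves $\beta_i,\beta_i'$ that cross $\alpha_i$. Using the hyperbolic trigonometry of the two right-angled hexagons obtained by cutting the adjacent pants (exactly as in the proof of Lemma~\ref{lem:collar}), $\cosh(\ell_M(\beta_i)/2)$ is an explicit smooth function of the neighbouring pants lengths, of $\ell_M(\alpha_i)$, and of $\tau_i$, and this function is strictly convex and symmetric in $\tau_i$ about its minimum. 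Thus $\ell_M(\beta_i)$ pins down $\tau_i$ up to a reflection, and a judicious choice of $\beta_i'$ (say the image of $\beta_i$ under a Dehn twist about $\alpha_i$) removes the remaining sign ambiguity. This yields a finite family $\gamma_1,\dots,\gamma_{m_0}$ for which $\varphi:M\mapsto (\ell_M(\gamma_i))_i$ is injective.

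To upgrade to projective injectivity, suppose $\ell_M(\gamma_i)=\lambda\,\ell_{M'}(\gamma_i)$ for all $i$ and some $\lambda>0$; one wants to force $\lambda=1$, after which injectivity concludes. Enlarge the collection to include three simple closed geodesics $\mu,\nu,\eta$ filling a subsurface $S_0$ homeomorphic to a one-holed torus (if $g\geq 1$) or to a four-holed sphere (if $g=0$, $n\geq 4$); the boundary curves of $S_0$ should already be detected by the pants part of the collection. On such a subsurface the classical Fricke trace identity gives a relation of the form
\[
P\bigl(\cosh(\ell_M(\mu)/2),\cosh(\ell_M(\nu)/2),\cosh(\ell_M(\eta)/2)\bigr)=Q\bigl(\cosh(\ell_M(\partial S_0)/2)\bigr),
\]
where $P,Q$ are explicit polynomials. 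The identity must hold simultaneously at $M$ and at $M'$, hence on both the vectors $(\ell_M(\gamma_i))$ and $\lambda^{-1}(\ell_M(\gamma_i))$; because $\cosh$ is transcendental, no nontrivial dilation preserves such a polynomial relation on a generic point of $\T_{g,n}$, so $\lambda=1$.

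The main obstacles are twofold. The less serious one is the global (rather than merely infinitesimal) sign resolution of the twist and the requirement that the auxiliary curves $\beta_i,\beta_i'$ be actually simple, both of which force a careful combinatorial choice of curves. The more delicate point is the treatment of the small-complexity cases $(g,n)\in\{(1,1),(0,4)\}$, where no proper subsurface of the required type exists and the Fricke identity must be applied on the ambient surface itself; this is where one should check that the three Markov-type curves chosen on $S_{1,1}$ or $S_{0,4}$ really are simple, intersect minimally, and give a relation that a scaling $\lambda\neq 1$ cannot preserve.
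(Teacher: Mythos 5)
The paper itself offers no proof of Theorem~\ref{thm:proj}: it only points to \cite{ham031,ham032,sc932,sc013} for proofs and variants, so your attempt has to be judged against the standard arguments rather than against anything in the text. Your first step is one of those standard arguments and is essentially sound: pants curve lengths determine the hexagon decomposition, a transversal curve's length is a convex function of the twist, and a Dehn-twisted copy resolves the residual reflection ambiguity. The only care needed is the one you already flag, plus the requirement that each $\beta_i$ be chosen inside the union of the one or two pairs of pants adjacent to $\alpha_i$, so that its length depends on no twist other than $\tau_i$ (otherwise the twists must be recovered simultaneously rather than one at a time).

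The genuine gap is in the upgrade to projective injectivity. The sentence ``because $\cosh$ is transcendental, no nontrivial dilation preserves such a polynomial relation on a generic point'' is an assertion, not an argument, and genericity is in any case insufficient: the theorem must hold at \emph{every} point of $\T_{g,n}$. More seriously, the relation you chose cannot carry the weight even in principle. The Fricke identity on $S_0$ is a \emph{single} equation among the four quantities $\ell(\mu),\ell(\nu),\ell(\eta),\ell(\partial S_0)$, all of which scale by $\lambda$ under your hypothesis; demanding that a quadruple and its $\lambda$-multiple both satisfy one equation is one condition on the four-parameter family (one-holed torus together with $\lambda$), so it gives no obstruction to solutions with $\lambda\neq 1$. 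This is precisely the phenomenon behind the paper's remark that the statement \emph{fails} for surfaces with variable boundary length \cite{pamcmul}: your $S_0$ is exactly such a surface, its boundary length varies over $\T_{g,n}$, and no argument that only sees lengths supported on $S_0$ and $\ell(\partial S_0)$ can force $\lambda=1$. What is needed is a relation among simple closed geodesics that is provably incompatible with scaling at every point --- for instance the resolution identity $\cosh\bigl(\ell(\gamma_+)/2\bigr)+\cosh\bigl(\ell(\gamma_-)/2\bigr)=2\cosh\bigl(\ell(\alpha)/2\bigr)\cosh\bigl(\ell(\beta)/2\bigr)$ for two simple curves meeting once, exploited via the strict convexity of $x\mapsto\cosh\bigl(\lambda\arccosh x\bigr)$ for $\lambda>1$ together with the strict inequality $\ell(\gamma_\pm)<\ell(\alpha)+\ell(\beta)$, which yields a contradiction for $\lambda\neq 1$ with no genericity assumption. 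As written, your second step does not establish the theorem.
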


Recall that a map $f: X \rightarrow V$ where $V$ is a real vector space is projectively injective if $f(x) = t f(y),$ for some $t \in \R$, implies $x=y$. Interestingly, this theorem fails to be correct in all generality if one allows surfaces with variable boundary length \cite{pamcmul}. There are different versions of it and as stated has probably been known for some time. We refer to \cite{ham031, ham032, sc932, sc013} for different statements about the $m$ and related questions if one allows the curves to not be simple and whether one wants a projectively injective map or just an injective map.\\

A tool that will come up several times in this discussion is the following, which will be called the length expansion lemma.

\begin{lemma}[Length expansion lemma] Let $S$ be a surface with $n > 0$ boundary curves $\gamma_1,\hdots,\gamma_n$. For
$(\varepsilon_1,\hdots,\varepsilon_n) \in (\R^+)^n$ with at least
one $\varepsilon_i\neq 0$, there exists a surface $\tilde{S}$ with
boundary geodesics of length
$\ell(\gamma_1)+\varepsilon_1,\hdots,\ell(\gamma_n)+\varepsilon_n$ such that
all corresponding simple closed geodesics in $\tilde{S}$ are of
length strictly greater than those of $S$.
\end{lemma}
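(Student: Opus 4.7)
My plan is to realize $\tilde{S}$ as the endpoint of a path $(S_t)_{t \in [0,1]}$ in $\T$ with $S_0 = S$ and boundary lengths $\ell_{S_t}(\gamma_i) = \ell_S(\gamma_i) + t\varepsilon_i$, arranged so that the length function $\ell(\sigma)$ of every non-boundary simple closed geodesic $\sigma$ is strictly increasing in $t$. Fix a pants decomposition $\PP = \{\gamma_1, \dots, \gamma_n, \alpha_1, \dots, \alpha_{3g-3+n}\}$ extending the boundary. It suffices to exhibit, at each point $S_t$, a tangent vector $v \in T_{S_t}\T$ with $d\ell(\gamma_i)(v) = \varepsilon_i$ and $d\ell(\sigma)(v) > 0$ for every such $\sigma$; integrating a measurable choice of such $v$ produces the desired path.

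Writing $v$ in Fenchel--Nielsen coordinates as
$$v = \sum_i \varepsilon_i \frac{\partial}{\partial \ell(\gamma_i)} + \sum_j \delta_j \frac{\partial}{\partial \ell(\alpha_j)} + \sum_j \theta_j \frac{\partial}{\partial \tau(\alpha_j)},$$
the unknowns become $(\delta_j,\theta_j)$, and each condition $d\ell(\sigma)(v) > 0$ is a strict linear inequality whose coefficients are computable via hexagonal hyperbolic trigonometry together with Wolpert's formula. Explicitly, $\sigma$ decomposes into orthogeodesic arcs in each pants of $\PP$, whose lengths are rational expressions in hyperbolic functions of the pants-boundary lengths; for instance $\cosh h_{qr} = (\cosh(p/2)+\cosh(q/2)\cosh(r/2))/(\sinh(q/2)\sinh(r/2))$ for the orthogeodesic from $q$ to $r$ in a pants with boundaries $p,q,r$, while the twist derivatives are given by $\partial\ell(\sigma)/\partial\tau(\alpha_j) = \sum_{x \in \sigma \cap \alpha_j}\cos\vartheta_x$.

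The principal obstacle is that the naive choice $\delta_j = \theta_j = 0$ does not work: the hexagon identity above shows that lengthening a single boundary $q$ of a pair of pants with $p$ and $r$ held fixed strictly shortens two of the three orthogeodesics between its boundaries, so a simple closed geodesic whose arc decomposition concentrates near a lengthened boundary can in fact shorten. The correct approach is to take $\delta_j > 0$ on every interior pants curve. For a simple closed geodesic $\sigma$ with $\ell_S(\sigma)$ larger than a suitably chosen threshold $L$, the topological constraint that $\sigma$ must cross the pants curves many times (either through intersection number $|\sigma \cap \alpha_j|$ or through winding) forces the coefficient of $\partial/\partial\ell(\alpha_j)$ in $d\ell(\sigma)$ to be large, so $d\ell(\sigma)(v) > 0$ is ensured automatically by a sufficiently positive $\delta_j$. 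By Lemma~\ref{lem:collar} and a standard consequence of it, only finitely many simple closed geodesics have length $\leq L$, and these impose a finite system of strict linear inequalities solvable in the $2(3g-3+n)$-dimensional space of free parameters $(\delta_j,\theta_j)$, where the twist coordinates supply the last degrees of freedom needed to reconcile the constraints with the prescribed boundary-length slopes $\varepsilon_i$.
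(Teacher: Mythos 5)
The paper itself offers no proof of this lemma (it defers to \cite{pa051,thspine}), so your proposal is compared against the arguments in those references, which are explicit geometric surgeries rather than a first-order analysis: one cuts along an embedded arc orthogonal to a boundary component and glues in a hyperbolic strip, so that every essential simple closed curve crossing the arc visibly and strictly lengthens, and a filling family of such arcs (meeting only the boundary components with $\varepsilon_i>0$) handles all infinitely many curves at once with no derivative estimates. You correctly isolate the central difficulty of the naive Fenchel--Nielsen deformation -- lengthening one cuff of a pair of pants shortens two of the three orthogeodesics -- but your proposed fix is asserted rather than proved, and it is false as stated.

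The decisive gap is the claim that taking $\delta_j>0$ on interior pants curves makes $d\ell(\sigma)(v)>0$ automatic for long $\sigma$ because the coefficient of $\partial/\partial\ell(\alpha_j)$ is ``large.'' Large is not enough; it must be large and \emph{positive}, and the same hexagon identity you quote shows that lengthening an interior cuff $\alpha_j$ also shortens two of the three orthogeodesics of each adjacent pair of pants. Concretely, if $\alpha_j$ is very short, the collar lemma (Lemma~\ref{lem:collar}) forces every transversal to be long, and one checks on the one-holed torus (half-twist gluing, $\cosh\frac{\ell(\delta')}{2}=\cosh\frac{c}{2}\cosh\frac{\ell(\delta)}{4}$) that $\ell(\delta')$ is a \emph{decreasing} function of $\ell(\delta)$ near $\ell(\delta)=0$; so $\partial\ell(\sigma)/\partial\ell(\alpha_j)$ can be negative, and a ``sufficiently positive $\delta_j$'' then makes things worse, not better. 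Even where the sign is favourable, both the helpful contributions (strands crossing $\alpha_j$) and the harmful ones (strands running close to a lengthened $\gamma_i$) grow linearly in the number of strands of $\sigma$, so ``long implies the good term dominates'' needs a quantitative comparison of rates that you do not supply; and a curve can be long purely by twisting about a single $\alpha_j$ while crossing it only twice, in which case $\sum_x\cos\vartheta_x$ is bounded by $2$ and the twist coordinates give no leverage. Finally, the residual finite system of strict inequalities on the affine slice $d\ell(\gamma_i)(v)=\varepsilon_i$ is not automatically consistent -- ruling out the Farkas-type obstruction (a nonnegative combination of the $d\ell(\sigma_k)$ lying in the span of the $d\ell(\gamma_i)$ with nonpositive pairing against $(\varepsilon_i)$) is essentially the lemma itself in infinitesimal form. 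The threshold $L$ also depends on the $(\delta_j,\theta_j)$ you are trying to choose, and the ``integrate a measurable choice of $v$'' step needs an open-condition or compactness argument, but these are secondary to the sign and consistency issues above.
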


There are different proofs and versions of this lemma \cite{pa051,thspine}. Recently, Papadopoulos and Th\'eret \cite{path101} proved a stronger version where they show that not only can one increase the lengths of all simple closed geodesics but one can do so such that the infimum of the ratios of lengths between the long geodesics and the short geodesics is strictly greater than $1$. They use this to show things about how one can and cannot generalize Thurston's asymmetric metric defined in \cite{th98} to surfaces with boundary (see also \cite{path07} for an overview on this metric). 

\section{Simple closed geodesics versus the set of closed geodesics}

One often studies the behavior of the set of simple closed geodesics in contrast with the set of closed geodesics. This section is devoted to showing that simple closed geodesics are rare in the set of closed geodesics in several ways.

\subsection{The non-density of simple closed geodesics}

One of the first remarkable results in this direction is a theorem of Birman and Series. It is well known that on hyperbolic surfaces, points lying on the set of closed geodesics form a dense subset of the surface. In fact, they are even dense in the tangent bundle. Birman and Series \cite{bise85} show that this is in sharp contrast with the set of simple closed geodesics (and more generally with the set of simple complete geodesics).

\begin{theorem}[Birman-Series]\label{thm:BS}
The set of points lying on a simple complete geodesic is nowhere dense and has Haussdorf measure one.
\end{theorem}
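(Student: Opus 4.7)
The plan is to reduce nowhere-density to an upper bound on the Hausdorff dimension, and then obtain that dimension bound by coding simple complete geodesics symbolically through a pants decomposition. Any subset of a surface with Hausdorff dimension strictly less than $2$ has vanishing $2$-dimensional Lebesgue measure and hence contains no open set, so nowhere-density is automatic once one knows the dimension is at most $1$. The matching lower bound is trivial, since the set contains individual simple geodesic arcs, which are $1$-dimensional. The real content is therefore the upper bound on dimension.

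For the coding, I would fix a pants decomposition $P = \{\alpha_1,\ldots,\alpha_{3g-3+n}\}$ with disjoint Keen collars, as provided by Lemma~\ref{lem:collar}. A simple complete geodesic $\gamma$ which is not itself one of the $\alpha_i$ crosses $\bigcup_i \alpha_i$ transversely in a discrete set of points, and between two consecutive crossings it traverses a single pair of pants along a properly embedded arc whose isotopy class rel boundary belongs to one of finitely many topological types (a pair of pants admits only finitely many isotopy classes of simple arcs joining boundary components). This produces a bi-infinite word $w(\gamma)$ in a finite alphabet $\mathcal{A}$ recording, at each step, which pants is entered and which arc-type is realized.

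The combinatorial heart of the proof, which is the main obstacle, is to show that the complexity function $P(m)$, meaning the number of admissible length-$m$ sub-words realized by some simple complete $\gamma$, grows at most polynomially in $m$. The restriction comes from global simplicity: beyond matching endpoints on each shared pants curve, the collection of arcs across all pants must be mutually disjoint on each $\alpha_i$, which is exactly the non-crossing condition encoded by Dehn--Thurston coordinates. A simple multi-arc with total geometric intersection number at most $m$ with $\bigcup_i \alpha_i$ is determined by a bounded amount of non-negative integer intersection data together with twists, living in a space of dimension $6g-6+2n$, so the resulting count is polynomial and gives $P(m) \leq C m^{6g-6+2n}$. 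This polynomial bound is the core of Birman and Series' argument and stands in stark contrast with the exponential complexity of the full geodesic flow.

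Once the polynomial count is in hand, the upper bound on dimension follows from hyperbolic divergence. Fix a small embedded disk $D \subset S$. Two simple geodesic segments entering $D$ with the same initial length-$m$ sub-word must remain within distance $O(e^{-cm})$ of one another throughout $D$, with $c > 0$ depending only on the collar widths provided by Lemma~\ref{lem:collar}. Hence the intersection of $D$ with the simple locus is covered by at most $P(m)$ strips of width $O(e^{-cm})$. For any $s > 1$, the resulting $s$-dimensional Hausdorff content is bounded by a constant times $P(m) \cdot e^{-scm}$, which tends to $0$ as $m \to \infty$ because $P(m)$ is only polynomial. This gives $\mathrm{dim}_H \leq 1$ and, combined with the trivial lower bound and the reduction above, establishes both claims of the theorem.
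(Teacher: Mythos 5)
Your overall strategy---code simple complete geodesics through a pants decomposition, show that the number of admissible combinatorial types of length $m$ grows only polynomially, and combine this with exponential convergence of geodesics sharing an itinerary so as to cover the simple locus by polynomially many exponentially thin strips---is exactly the Birman--Series argument that the paper sketches (the set of simple complete geodesics lies in the $Ce^{-\alpha n}$-neighborhood of at most $P(n)$ geodesic arcs of bounded length), so at the level of ideas you have reconstructed the intended proof of the dimension statement (the theorem's ``Haussdorf measure one'' should be read as Hausdorff \emph{dimension} one, as you did). However, your reduction of nowhere-density to the dimension bound is false as stated: a set of Hausdorff dimension at most $1$ has empty interior, but empty interior is not nowhere-density---the rationals in $\R$ have measure zero, empty interior, and are dense. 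Nowhere-density requires controlling the \emph{closure} of the simple locus. This is recoverable from your own covering, since for each $m$ the closure is still contained in the finite union of \emph{closed} strips of total area $O\bigl(P(m)e^{-cm}\bigr)$, which forces the closure to have measure zero and hence empty interior; alternatively one shows directly that the union of all complete simple geodesics is closed (a limit of simple geodesics is simple). Some such sentence is needed; as written, your first paragraph proves strictly less than the theorem claims.

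Two further points in the quantitative step need repair, though neither is fatal. The contraction estimate is stated one-sidedly: two geodesics entering a disk $D$ with the same \emph{forward} itinerary of length $m$ fellow-travel ahead of $D$, and hyperbolic convexity makes them exponentially close only deep inside that common stretch, not at the entry point in $D$. To get width $O(e^{-cm})$ inside $D$ you must require the itineraries to agree for $m$ steps in \emph{both} directions from $D$, i.e.\ index the strips by two-sided words centered at $D$; this is precisely why Birman and Series phrase their estimate in terms of neighborhoods of arcs of length at most $L$ sitting in the middle of long simple segments. Doubling the word length leaves the polynomial count intact. Finally, the count itself requires the observation that the twisting of a simple arc about each pants curve is bounded linearly by its intersection number with the pants decomposition---this is what keeps the Dehn--Thurston data, intersection numbers together with twists, confined to a box of side $O(m)$ in a space of bounded dimension and hence polynomial in $m$; without it, the ``finitely many arc types per pair of pants'' remark does not by itself exclude unbounded winding in the collars of Lemma~\ref{lem:collar}.
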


More generally, they show this to be true for complete geodesics with bounded self-intersection number. One of the essential steps in their approach is he following. For a given $L$, the set of simple complete geodesics lie in an $\epsilon$ neighborhood of some set of geodesic arcs of length less than $L$. They show that for any given surface, there are positive constants $L,C,\alpha$ and a polynomial $P$ such that the full set of simple complete geodesics lies in the $\varepsilon= Ce^{-\alpha n}$ neighborhood of a set of at most $P(n)$ geodesic arcs of length at most $L$. This has to do with the fact that ``long" simple complete geodesics spend great deals of time running parallel to themselves. Along similar lines, there have been different descriptions of algorithms which determine whether a word in the fundamental group corresponds to a simple closed geodesic, including one by Birman and Series \cite{bise842}.\\

In particular one can ask how ``non-dense" the simple closed geodesics are on surfaces. In \cite{pabugaps} the following theorem is shown.

\begin{theorem}\label{thm:gaps}
There exists a constant $c_g > 0$, depending only on $g$, such that any compact Riemann surface $M$ of genus $g$ contains a disk of radius $c_g$ into which the simple closed geodesics do not enter. Reciprocally, for any $\varepsilon>0$ there exists a surface $S_{\varepsilon}$ on which the geodesics are $\varepsilon$-dense.
\end{theorem}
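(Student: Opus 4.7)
The plan is to treat the two halves of the theorem separately. For the disk-existence half I would combine a Bers pants decomposition with a local analysis on a single pair of pants; for the reciprocal $\varepsilon$-density statement I would give an explicit high-genus construction.

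\emph{Existence of the disk.} First I would invoke Bers' theorem to choose a pants decomposition $\alpha_1, \dots, \alpha_{3g-3}$ of $M$ whose curves all have length at most the Bers constant $B_g$, and then fix any pair of pants $P$ of this decomposition. Every simple closed geodesic $\delta$ of $M$ either coincides with one of the $\alpha_i$, is disjoint from $P$, or crosses $\partial P$ transversely and leaves in $P$ a disjoint family of simple geodesic arcs with endpoints on $\partial P$. The essential topological input is that on a pair of pants there are only finitely many isotopy classes of essential simple arcs relative to the boundary (the seams between distinct components together with the loops based on one boundary encircling another), so the $\delta$-arcs altogether lie in a fixed finite collection of geodesic strips in $P$. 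The quantitative step is then to show that, with boundary lengths bounded by $B_g$, these strips cannot cover all of $P$ and in fact leave uncovered a disk of radius $c(B_g)>0$. Admissible arc configurations are constrained because the endpoints along each $\alpha_i$ must glue across $\alpha_i$ with those coming from the neighbouring pants to form a simple curve on the ambient $M$, and the bounded boundary lengths force bounded seam lengths and a bounded right-angled hexagonal decomposition of $P$; placing the candidate disk near the midpoint of one of the three seams then suffices.

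\emph{Main obstacle.} The main difficulty is the degenerate regime where some $\alpha_i$ is arbitrarily short, in which case the containing pants collapses along that boundary and the estimate above breaks. Here I would switch to the collar lemma (Lemma~\ref{lem:collar}): the embedded collar around a short $\alpha_i$ is correspondingly wide, and any simple closed geodesic other than $\alpha_i$ crosses this collar as essentially perpendicular strands. One can then place inside the collar a disk of radius comparable to $w(\alpha_i)/4$, pushed off the axis so as to avoid $\alpha_i$ itself and chosen, via a pigeonhole argument on the essentially perpendicular crossings, to sit strictly between two consecutive such crossings. Taking the worse of the two regimes produces the uniform constant $c_g$.

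\emph{Reciprocal density.} For the second statement the plan is to construct $S_\varepsilon$ as a closed hyperbolic surface of sufficiently large genus in which the simple closed geodesics form an $\varepsilon$-dense subset. A concrete approach is to take genus growing roughly like $\varepsilon^{-2}$ and to adjust the Fenchel--Nielsen parameters so that a union of pants decomposition curves, together with the additional simple closed geodesics obtained by performing elementary moves on the decomposition, covers the surface to within $\varepsilon$. No essential obstacle arises here: as genus grows, the number of simple closed geodesics of bounded length grows polynomially, and the moduli space is large enough to accommodate a distribution of them that is as dense as desired.
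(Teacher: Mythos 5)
Your plan for the first half is genuinely different from the paper's, which obtains a gap on each \emph{individual} surface directly from the Birman--Series theorem (Theorem~\ref{thm:BS}), and then gets a constant depending only on $g$ by proving continuity of the ``gap'' function on Teichm\"uller space, invoking compactness of the thick part of moduli space, and treating the thin part separately. Your direct attack through a Bers pants decomposition founders at what you call the quantitative step. The topological input is correct --- there are only six isotopy classes of essential simple arcs on a pair of pants $P$ --- but each class does \emph{not} give a thin ``geodesic strip'': with endpoints free on $\partial P$ and arbitrary angles of incidence (arcs cut from simple closed geodesics of $M$ need not meet $\partial P$ perpendicularly), the geodesic representatives of a single class form a two-parameter family whose union is an open two-dimensional region of $P$; for example, the arcs in the class of the seam joining two boundary components sweep out a full neighbourhood of that seam, and twisted representatives wrap around the collars of the boundary curves. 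Showing that the union of these six regions still misses a disk of radius bounded below in terms of $B_g$ alone is essentially the theorem itself localized to a pair of pants; it does not follow from the finiteness of the arc classes, and placing the candidate disk ``near the midpoint of a seam'' is exactly where such arcs do pass. The thin-part patch has a parallel problem: infinitely many simple closed geodesics may cross a short pants curve, so there need be no ``two consecutive crossings'' with a definite gap between them without a further argument --- this is precisely the ``discussion of the constant behavior in the thin part'' that the actual proof must supply.

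For the reciprocal statement the paper's route is different and much more economical: closed geodesics are dense, so some surface carries a single closed geodesic that is $\varepsilon$-dense; by Scott's theorem there is a finite cover in which all primitive lifts of that geodesic are simple, and since the covering is a local isometry these simple closed geodesics are still $\varepsilon$-dense upstairs. Your alternative --- tuning Fenchel--Nielsen coordinates in genus roughly $\varepsilon^{-2}$ so that pants curves and elementary-move curves are $\varepsilon$-dense --- is not carried out: a pair of pants with bounded boundary lengths always contains an embedded disk of definite radius, so the pants curves alone are never $\varepsilon$-dense, and the assertion that adding finitely many elementary-move curves closes every gap is exactly what would need to be proved; polynomial growth of the number of simple closed geodesics says nothing about where on the surface they lie. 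I would replace this half of your argument by the Scott covering argument.
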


To show the existence of the constants, one uses theorem \ref{thm:BS} and one needs to show the continuity of the ``gaps" in Teichm\"uller space, use the compactness of the thick part of Teichm\"uller space and a discussion of the constant behavior in the thin part. The converse is essentially a consequence of a theorem of Scott's \cite{sco78,sco85} which says that given any closed geodesic on a surface, there exists a finite cover of the surface where all of the primitive lifts of the original closed geodesic are simple. Rivin has recently asked \cite{ri09} if one can quantify Scott's result, i.e., compute or at least find bounds on the minimal degree cover which is necessary to ``unravel" a closed geodesic with $k$ self-intersections.\\

\subsection{The growth of the number of simple closed geodesics}
A recent result of Mirzakhani's computes the asymptotic growth of the number of simple closed geodesics of less than a given length, and as we shall outline, this provides another ``simple closed geodesics are rare" analogy. If one counts the number of closed geodesics on hyperbolic surfaces of length less than $L$, the asymptotic result does not depend on the surface, or even on the topology of the surface. Quite remarkably, this number always behaves asymptotically like $\frac{e^L}{L}$ (this is sometimes referred to as Huber's asymptotic law \cite{hu59}). In contrast, Mirzakhani has shown the following.

\begin{theorem}[Mirzakhani]\label{thm:growth}
Let $\mathscr{N}_M(s,L)$ be the number of simple closed geodesics of lengths ${}\leq L$ on $M$. For $L \to \infty$ this number has the asymptotic behavior
 \begin{equation*}%
\mathscr{N}_M(s,L) \sim c_M L^{6g-6},
 \end{equation*}%
where $c_M$ is a constant depending on $M$.
 \end{theorem}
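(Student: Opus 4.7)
The plan is to reduce the counting problem to a lattice-point-counting problem on Thurston's space of measured laminations $\mathcal{ML}(M)$. First I would break up the set of simple closed geodesics into topological types: since every simple closed curve is determined up to the mapping class group $\mathrm{Mod}(M)$ by how it separates the surface, there are only finitely many $\mathrm{Mod}(M)$-orbits of simple closed curves. It therefore suffices to prove an asymptotic of the form $c_{M,[\gamma]} L^{6g-6}$ for the counting function restricted to a single orbit $\mathrm{Mod}(M)\cdot\gamma$ and then to sum over the finitely many orbit types.

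Next I would embed the orbit into $\mathcal{ML}(M)$, viewing a simple closed curve $\alpha$ as the integral point of $\mathcal{ML}(M)$ obtained by equipping it with the counting transverse measure. Under the (extended, continuous) hyperbolic length function $\ell_M: \mathcal{ML}(M)\to \R^+$, the set we want to count is the intersection of the orbit with the ball $\{\lambda \in \mathcal{ML}(M) : \ell_M(\lambda)\leq L\}$. Since $\ell_M$ is homogeneous of degree one in the transverse measure, this ball is precisely $L\cdot B_M$, where $B_M$ is the unit ball for the length function, and $\mathcal{ML}(M)$ is a piecewise-linear manifold of dimension $6g-6$ carrying the $\mathrm{Mod}(M)$-invariant Thurston measure $\mu_{\mathrm{Th}}$. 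This is what produces the exponent $6g-6$.

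To turn the lattice-point heuristic into an honest asymptotic, I would invoke the ergodicity of the $\mathrm{Mod}(M)$-action on projective measured laminations with respect to $\mu_{\mathrm{Th}}$ (Masur--Veech). A standard averaging/equidistribution argument then gives that the number of orbit points of $\gamma$ inside $L\cdot B_M$ is asymptotic to
\[
\frac{\mu_{\mathrm{Th}}(B_M)}{m_\gamma}\, L^{6g-6},
\]
where $m_\gamma$ accounts for the covolume of the stabilizer of $\gamma$ in the appropriate sense. Summing over the finitely many topological orbit types produces a constant $c_M$ depending only on $M$. To obtain a more useful expression for $c_M$, and to confirm it really is finite and positive, I would unfold the sum over the orbit using Mirzakhani's integration formula, which rewrites an average of geometric functions over $\mathrm{Mod}(M)$-orbits as an integral over the moduli space of bordered surfaces obtained by cutting along $\gamma$ against Weil--Petersson volumes.

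The main obstacle is the third step: going from the a priori bound that the counting grows at most polynomially of degree $6g-6$ to an honest limit with an explicit constant. This requires both the ergodicity of the mapping class group on $\mathcal{PML}$ with respect to Thurston measure and a uniform control on how the hyperbolic length function approximates the Thurston norm along the orbit, so that the ``ball'' $B_M$ is well-enough behaved (measurable, finite Thurston volume, negligible boundary) for an equidistribution statement to apply. The rest — finiteness of orbit types, polynomial upper bound, and the identification of the constant via the integration formula — is, while technical, essentially packaging.
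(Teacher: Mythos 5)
Your outline matches the approach sketched in the chapter (which is Mirzakhani's own strategy): reduce to finitely many mapping class group orbits of simple closed curves, count each orbit by viewing it as an ``integer lattice'' in the space of measured laminations with its Thurston measure, use ergodicity of the mapping class group action together with the integration formula over moduli space to get the per-orbit asymptotic $c_{M,[\gamma]}L^{6g-6}$, and sum. This is essentially the same route, correctly identifying the equidistribution step as the genuine difficulty; the only quibble is that the ergodicity of the mapping class group action on $\mathcal{PML}$ with respect to the Thurston measure is due to Masur.
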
%

Mirzakhani's theorems show more than the above stated result, and in particular it is shown that the leading coefficient $c_M$ gives a continuous proper function over moduli space. One of the ideas of the proof is to notice that up to action of the mapping class group, there are only a finite number of types of simple closed curves on a surface. For each type, one can count the growth of a simple closed geodesic under the action of the mapping class group (which she does) and then obtain the result by adding the finite number of types. There is quite a rich history to this problem which traces back to Dehn. Rivin \cite{ri01} had previously obtained partial and related results. He also provided a simplified proof \cite{ri05} and explains some of the history of the problem.\\

Also prior to Mirzakhani's work, was a theorem of McShane and Rivin \cite{mcri951,mcri952} in the particular case of a once punctured torus. They obtained the same theorem, and showed \cite{mcri951} that the leading asymptotic coefficient correspond has to do with the stable norm on the homology of the torus. Indeed, in the case of once punctured tori, homology classes and oriented not necessarily primitive homotopy classes of simple closed curves coincide. Given a metric on a torus $T$, the length of the corresponding geodesics induces a norm on the integer homology of a torus $H_1(T,\Z)$ which can be extended to a norm on $H_1(T,\R)$. What they show is that the leading asymptotic coefficient is in fact the inverse of the area of the unit ball of this norm. They conjecture that the leading coefficient is maximal (i.e. the area of the unit ball is minimal) for the modular torus. The modular torus is the unique once punctured torus with an isometry group of order $12$, and it can be obtained for instance by a quotient of $\Hyp$ by an index $3$ subgroup of $\PSL_2(\Z)$. Similarly, one could ask the same questions about the constants that appear in as the leading asymptotic coefficients of Mirzakhani's formula. 

\subsection{Multiplicities of simple closed geodesics}

A well known theorem of Randol \cite{ra80}, using a construction due to Horowitz, states that multiplicity is unbounded in the set of lengths of closed geodesics. The proof is by construction, and in fact one constructs arbitrarily large sets of homotopy classes of closed curves whose geodesic representatives all have the same length, regardless of the hyperbolic metric on the surface (the length of course does depend on the metric, but the lengths are always equal). There is a nice illustrated proof of Randol's result in \cite{bubook}.\\

Leininger \cite{le03} has shown that these curves, sometimes called equivalent curves, have the property that they intersect all simple closed geodesics the same number of times. Essentially, this follows from the collar lemma. However, somewhat surprisingly, the converse fails to be correct and this is more subtle. Note that this is again in contrast with the set of simple closed geodesics. Indeed Thurston \cite{th79} uses the fact that homotopy classes of simple closed curves are determined by their intersection numbers with other simple closed curves to construct his compactification of Teichm\"uller space.\\

In contrast one can study multiplicity in the simple length spectrum. The following is true \cite{pamcmul}.

\begin{theorem}\label{thm:baire}
The set of surfaces with all simple closed geodesics of distinct length is dense in Teichm\"uller space and its complement is Baire meagre.
\end{theorem}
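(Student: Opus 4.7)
The plan is to express the set of surfaces admitting a length coincidence as a countable union of closed, nowhere-dense real-analytic subvarieties of $\T_{g,n}$, and then invoke the Baire category theorem. Enumerate the (countably many) free homotopy classes of essential non-peripheral simple closed curves as $\gamma_1,\gamma_2,\ldots$ and, for each pair $i\neq j$, set
$$E_{i,j} := \{\, M\in\T_{g,n} : \ell_M(\gamma_i)=\ell_M(\gamma_j)\,\}.$$
The ``bad'' set is $E:=\bigcup_{i\neq j}E_{i,j}$, and what we want is that $\T_{g,n}\setminus E$ is dense and residual. Since each length function $M\mapsto \ell_M(\gamma_k)$ is real analytic, each $E_{i,j}$ is the zero set of a real analytic function on $\T_{g,n}$, hence is either all of $\T_{g,n}$ or a proper analytic subvariety. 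In the latter case it is automatically closed and nowhere dense.

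The core of the argument is therefore to show that $E_{i,j}\subsetneq\T_{g,n}$ whenever $\gamma_i$ and $\gamma_j$ are distinct free homotopy classes. To do this I would extend $\gamma_i$ to a pants decomposition $P$ (possible since $\gamma_i$ is essential and non-peripheral) and work in Fenchel--Nielsen coordinates $(\ell_1,\theta_1,\ldots,\ell_{3g-3+n},\theta_{3g-3+n})$ with $\ell_1=\ell(\gamma_i)$. If $\gamma_j$ also belongs to $P$, then $\ell(\gamma_j)$ equals a different length coordinate and the two length functions are manifestly distinct. Otherwise $\gamma_j$ must cross some pants curve $\gamma_k\in P$; by the collar lemma (Lemma~\ref{lem:collar}), letting $\ell_k\to 0$ while keeping the remaining Fenchel--Nielsen coordinates fixed forces $\ell(\gamma_j)\to\infty$, whereas $\ell(\gamma_i)=\ell_1$ either stays fixed (if $k\neq 1$) or tends to $0$ (if $k=1$). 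Either way the two length functions disagree on the resulting deformation, so $E_{i,j}$ is a proper subvariety.

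Since $\T_{g,n}$ is homeomorphic to a Euclidean space it is a Baire space, so the countable union $E$ is meagre and its complement is a dense $G_\delta$. This gives both conclusions of the theorem simultaneously. The only real obstacle is the middle step: one must rule out that two topologically distinct simple closed curves have identical length functions on all of Teichm\"uller space, and while the Fenchel--Nielsen pinching argument above is short, it does rest essentially on the collar lemma and on the fact that any essential non-peripheral simple closed curve can be completed to a pants decomposition.
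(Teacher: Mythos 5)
Your proposal is correct and follows essentially the same route as the paper: the bad set is written as a countable union of zero sets of differences of analytic length functions, these are shown to be closed with empty interior (the paper likewise rules out $E_{i,j}=\T_{g,n}$ via the collar lemma), and the Baire category theorem finishes the argument. Your Fenchel--Nielsen pinching argument just makes explicit the step the paper leaves as a remark.
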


As its statement suggests one can show this by using the Baire category theorem. Denote $\equal(\alpha,\beta)$ the set of all surfaces where a distinct pair homotopy classes of simple closed curves have geodesic representatives of the same length. Because length functions are analytic, these are zero sets of length functions, and are thus closed. Also, these sets have no interior, because otherwise they would be equal over all Teichm\"uller space. Using the collar theorem, one can show this to be impossible. The set of surfaces with all simple closed geodesics of distinct length is the complement of the union $\equal$ of the sets $\equal(\alpha,\beta)$. The result then follows from the Baire category theorem as there are a countable number of homotopy classes.\\

One can actually say more about the topology of the sets $\equal(\alpha,\beta)$ and their union $\equal$ \cite{pamcmul}.

\begin{theorem}
The sets $\equal(\alpha,\beta)$ are connected sub-manifolds of
Teichm\"uller space. The set $\equal$ is connected.
\end{theorem}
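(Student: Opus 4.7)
The plan rests on the fact that each $\equal(\alpha,\beta)$ is the zero locus of the real-analytic function $F_{\alpha,\beta}:=\ell(\alpha)-\ell(\beta)$ on $\T_{g,n}$. For the submanifold structure, by the analytic implicit function theorem it suffices to verify that $d\ell(\alpha)\neq d\ell(\beta)$ at every point of $\T_{g,n}$ whenever $\alpha$ and $\beta$ are distinct simple isotopy classes. The key ingredient is that a differential $d\ell(\alpha)$ essentially encodes $\alpha$ itself via its intersection pairing with other curves: using Wolpert's twist-derivative formula $\partial\ell(\delta)/\partial t_\gamma=\sum_{p\in\gamma\cap\delta}\cos\theta_p(\gamma,\delta)$, one can pair $d\ell(\alpha)-d\ell(\beta)$ with the earthquake vector fields $\partial_{t_\gamma}$ for a sufficiently large family of $\gamma$'s to extract the geometric intersection patterns of $\alpha$ and $\beta$ with those curves. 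Since distinct isotopy classes of simple closed curves are separated by their intersection numbers with simple closed curves (as used by Thurston to define his compactification), one deduces $d\ell(\alpha)\neq d\ell(\beta)$; a clean way to package this is via Bonahon's duality between measured laminations and tangent vectors to $\T_{g,n}$.

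For the connectedness of a single piece $\equal(\alpha,\beta)$ I would work in Fenchel-Nielsen coordinates. If $\alpha$ and $\beta$ are disjoint, pick a pants decomposition containing both, so $\equal(\alpha,\beta)$ is the linear hyperplane $\{\ell_\alpha=\ell_\beta\}$, which is visibly connected. If $\alpha$ and $\beta$ intersect, fix a pants decomposition $P$ containing $\alpha$ and choose any pants curve $\gamma_i\in P\setminus\{\alpha\}$ meeting $\beta$ (such a $\gamma_i$ exists, since otherwise $\beta$ would be isotopic to a component of $P$ and hence disjoint from $\alpha$). Along each line where only $t_{\gamma_i}$ varies, $\ell(\alpha)$ is constant while Kerckhoff's strict convexity of length functions along earthquakes makes $\ell(\beta)$ a proper strictly convex function of $t_{\gamma_i}$, so the equation $F_{\alpha,\beta}=0$ admits at most two solutions on that line. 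This realises $\equal(\alpha,\beta)$ as a branched double cover over a closed half-space in the remaining coordinates, branching along a connected fold locus where the two sheets meet; combined with the submanifold structure from the previous step, the total space is a connected smooth hypersurface.

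For the connectedness of the union $\equal=\bigcup_{(\alpha,\beta)}\equal(\alpha,\beta)$, I would chain pieces together: given pairs $(\alpha,\beta)$ and $(\alpha',\beta')$ and a surface $M\in\equal(\alpha,\beta)$, the length expansion lemma allows me to deform $M$ inside $\T_{g,n}$ until a third length coincides with the common value $\ell(\alpha)=\ell(\beta)$, at which point we have landed in some additional piece, say $\equal(\alpha,\alpha')$, meeting $\equal(\alpha,\beta)$; iterating finitely many such moves links any two pieces inside $\equal$. The hardest step of the whole argument is the submanifold claim, since the difference $d\ell(\alpha)-d\ell(\beta)$ has no obvious a priori lower bound and ruling out its vanishing at every point of $\equal(\alpha,\beta)$ requires a genuine input about how simple isotopy classes are determined by the differentials of their length functions, rather than just by their values.
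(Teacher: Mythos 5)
Your overall architecture (implicit function theorem for the submanifold claim, then separate connectedness arguments) is reasonable, but each of your three steps has a genuine gap, and in each case the paper takes a different and cleaner route. For the submanifold claim, Wolpert's formula gives $\partial\ell(\delta)/\partial t_\gamma=\sum_p\cos\theta_p$, a sum of cosines of intersection angles; this is \emph{not} the geometric intersection number $i(\gamma,\delta)$, and pairing $d\ell(\alpha)-d\ell(\beta)$ with twist fields does not let you ``extract the geometric intersection patterns,'' so you cannot invoke Thurston's rigidity of intersection numbers to conclude $d\ell(\alpha)\neq d\ell(\beta)$. (The fact you are implicitly appealing to --- that at a fixed point of $\T_{g,n}$ the map $\lambda\mapsto d\ell_\lambda$ is injective on measured laminations --- is a genuine theorem in its own right, not a consequence of what you wrote.) The paper's route is Thurston's stretch maps: stretching along a complete lamination containing $\alpha$ but not $\beta$ produces a tangent direction in which $\log\ell(\alpha)$ has derivative exactly $1$ while $\log\ell(\beta)$ has derivative strictly less than $1$, so $d\bigl(\ell(\alpha)-\ell(\beta)\bigr)\neq 0$ everywhere on the locus $\ell(\alpha)=\ell(\beta)$, and the implicit function theorem applies.

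Your connectedness arguments also do not close. For a single piece, the claim that $\equal(\alpha,\beta)$ is ``a branched double cover over a closed half-space'' is unjustified: the locus of base Fenchel--Nielsen coordinates over which the twist line meets $\equal(\alpha,\beta)$ is the sublevel set where $\min_{t_{\gamma_i}}\ell(\beta)\le\ell(\alpha)$, and nothing you have said shows this set is a half-space, or even connected, nor that the fold locus is nonempty and connected in a way that glues the two sheets into one component. For the union $\equal$, the chaining argument fails at the first step: the length expansion lemma increases boundary lengths of a surface with boundary and gives no deformation that stays inside $\equal(\alpha,\beta)$ until a third coincidence appears; moreover two pieces $\equal(\alpha,\beta)$ and $\equal(\alpha',\beta')$ are codimension-one submanifolds that need not intersect at all, so ``iterating finitely many such moves'' has no guarantee of producing a chain. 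The paper instead derives connectedness of $\equal$ from Theorem \ref{thm:noarc}: every nonconstant path in $\T_{g,n}$ meets $\equal$, hence the complement $\equal^c$ is totally disconnected, and the complement of a totally disconnected subset of $\T_{g,n}\cong\R^{6g-6+2n}$ is connected. That argument requires no analysis of how the individual pieces fit together, which is precisely where your proposal breaks down.
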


To show that these are indeed sub-manifolds and not just sub-varieties, one uses Thurston's stretch maps \cite{th98}. The connectedness of $\equal$ is a consequence of what follows.\\

A natural question to ask is whether one can deform a surface within $\equal^c$. In fact not \cite{pamcmul}.

\begin{theorem}\label{thm:noarc}
If ${\mathcal A}$ is a path in Teichm\"uller space then there is
a surface on ${\mathcal A}$ which has at least two distinct simple closed
geodesics of the same length.
\end{theorem}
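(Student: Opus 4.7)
The plan is to argue by contradiction: suppose $\mathcal{A}\colon[0,1]\to\T$ is a non-constant continuous path entirely contained in $\equal^c$, and derive a contradiction. The first ingredient is immediate from continuity: for every pair $\alpha\neq\beta$ of distinct homotopy classes of simple closed curves, the function $t\mapsto\ell_\alpha(\mathcal{A}(t))-\ell_\beta(\mathcal{A}(t))$ is continuous on $[0,1]$ and never vanishes, hence has constant sign. Thus the relative ordering by length of every pair of simple closed geodesics is the same for all surfaces on $\mathcal{A}$.

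Next, by Theorem \ref{thm:proj} a finite set $\gamma_1,\ldots,\gamma_m$ of simple closed curves gives a projective injection $\varphi\colon M\mapsto(\ell_M(\gamma_1),\ldots,\ell_M(\gamma_m))$. Since $\mathcal{A}$ is non-constant, $\varphi\circ\mathcal{A}$ is a non-constant path in $\RP^{m-1}$, so at least one ratio $\ell_{\gamma_i}(\mathcal{A}(t))/\ell_{\gamma_j}(\mathcal{A}(t))$ varies continuously with $t$ while, by the first step, remaining strictly on one side of $1$. The final step is to leverage this varying ratio to produce a pair of simple closed geodesics whose lengths coincide at some intermediate surface of $\mathcal{A}$, contradicting the ordering preservation.

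The main obstacle is precisely this last step. A natural strategy is to work in Fenchel-Nielsen coordinates: a non-trivial deformation must move at least one length or twist parameter, and iterating Dehn twists on a well-chosen test curve produces an infinite family of simple closed geodesics whose lengths depend continuously and, via hyperbolic trigonometry and the collar lemma, in a controlled way on the twist. Combined with the length expansion lemma to calibrate the remaining parameters, one should be able to exhibit within such a family a pair of curves whose length-difference changes sign along $\mathcal{A}$, at which point the intermediate value theorem forces the desired equality. The delicate point is guaranteeing that this crossing occurs no matter how small the variation of $\varphi\circ\mathcal{A}$, and arranging the Dehn-twist families so that their lengths sweep densely enough within the relevant range.
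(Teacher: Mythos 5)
Your proposal reproduces the paper's architecture --- use the projectively injective map of Theorem \ref{thm:proj} to see that two distinct surfaces on the path have non-proportional simple length vectors, produce from this a pair of simple closed curves whose length order is reversed between those two surfaces, and finish with the intermediate value theorem along the path --- but it stops exactly where the proof actually lives. The entire content of the argument is the construction of a pair $(\alpha,\beta)$ with $\ell_{M_1}(\alpha)>\ell_{M_1}(\beta)$ and $\ell_{M_2}(\alpha)<\ell_{M_2}(\beta)$, and you do not carry it out; you explicitly flag it as ``the main obstacle'' and offer only a speculative route. As written, the proposal is a correct reduction of the theorem to its hard step, not a proof. (A cosmetic remark: the contradiction framing and the tracking of sign-constancy along the whole path are unnecessary; it suffices to pick two distinct points $M_1,M_2$ on $\mathcal{A}$, build the reversed pair for them, and apply the intermediate value theorem to $t\mapsto \ell_{\mathcal{A}(t)}(\alpha)-\ell_{\mathcal{A}(t)}(\beta)$ on the subpath joining them.)

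That said, your instinct about how to fill the gap points in the right direction, and it is essentially what is done in the reference behind this theorem. Projective injectivity gives indices $i,j$ with $\ell_{M_1}(\gamma_i)/\ell_{M_1}(\gamma_j)\neq \ell_{M_2}(\gamma_i)/\ell_{M_2}(\gamma_j)$. The lengths of the curves $T_{\gamma_i}^{n}(\delta)$ obtained by twisting a transverse curve $\delta$ about $\gamma_i$ grow like $n\,\ell(\gamma_i)$ plus an error that is bounded uniformly (this is where the collar lemma and hyperbolic trigonometry enter), and similarly for $\gamma_j$. Hence the achievable ratios of lengths between the two families accumulate on $\{\,p\,\ell(\gamma_i)/(q\,\ell(\gamma_j)) : p,q\in\mathbb{N}\,\}$, which is dense in $(0,\infty)$; choosing $p/q$ strictly between the two endpoint ratios and $n$ large enough to beat the additive errors yields the reversed pair, \emph{no matter how small} the discrepancy in the ratios is. That last quantitative step --- uniform control of the error terms so that an arbitrarily small projective displacement still forces an order reversal --- is precisely the delicate point you identify but do not resolve, so the gap is genuine.
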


As a consequence, the set $\equal$ is the complement of a totally disconnected set of Teichm\"uller space and as such is connected. The proof of this uses the projectively injective map to Teichm\"uller space described previously. Given two distinct surfaces $M_1$ and $M_2$, one constructs pairs of curves $\alpha,\beta$ such that $\ell_{M_1}(\alpha)>\ell_{M_1}(\beta)$ and $\ell_{M_2}(\alpha)<\ell_{M_2}(\beta)$ and the construction relies on the projectively injective map. Now by continuity of length functions, along any path between $M_1$ and $M_2$ there is a point on which the two simple closed geodesics have the same length. As a corollary of the above discussion one obtains the following.

\begin{corollary} The marked order in lengths of simple closed geodesics determines a unique point in $\T_{g,n}$.
\end{corollary}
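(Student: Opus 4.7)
The plan is to prove the contrapositive: if $M_1 \neq M_2$ in $\T_{g,n}$, then the marked orderings on lengths of simple closed geodesics on $M_1$ and $M_2$ must differ. Concretely, I will produce a pair of simple closed curves $\alpha, \beta$ with
\[
\ell_{M_1}(\alpha) > \ell_{M_1}(\beta) \qquad \text{and} \qquad \ell_{M_2}(\alpha) < \ell_{M_2}(\beta).
\]
This is precisely the construction already referenced in the proof sketch of Theorem \ref{thm:noarc}, and once it is in place the corollary follows immediately.

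First I would invoke Theorem \ref{thm:proj} to fix a finite family $\gamma_1, \ldots, \gamma_m$ of simple closed geodesics whose length vector $\varphi(M) = (\ell_M(\gamma_1), \ldots, \ell_M(\gamma_m))$ is projectively injective on $\T_{g,n}$. Since $M_1 \neq M_2$, the two vectors $\varphi(M_1)$ and $\varphi(M_2)$ are not positive scalar multiples of one another, so for some pair $i, j$ one has
\[
\frac{\ell_{M_1}(\gamma_i)}{\ell_{M_1}(\gamma_j)} \neq \frac{\ell_{M_2}(\gamma_i)}{\ell_{M_2}(\gamma_j)}.
\]
After swapping $M_1$ and $M_2$ if necessary and choosing a rational $p/q$ strictly between these two ratios, I get
\[
q\,\ell_{M_1}(\gamma_i) > p\,\ell_{M_1}(\gamma_j) \qquad \text{and} \qquad q\,\ell_{M_2}(\gamma_i) < p\,\ell_{M_2}(\gamma_j).
\]

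The main obstacle is upgrading these numerical inequalities on integer combinations of lengths to honest inequalities between the lengths of two simple closed curves $\alpha, \beta$ on the topological surface of type $(g,n)$. The strategy is to manufacture $\alpha, \beta$ combinatorially from the data $(\gamma_i, \gamma_j, p, q)$ in a way that makes $\ell_M(\alpha)$ and $\ell_M(\beta)$ equal, or uniformly track, $q\,\ell_M(\gamma_i)$ and $p\,\ell_M(\gamma_j)$ for every $M \in \T_{g,n}$. One route is to pass to a finite topological cover in which the iterates $\gamma_i^q$ and $\gamma_j^p$ unwrap to embedded simple loops, a cover provided by Scott's lifting theorem mentioned earlier in the discussion of Theorem \ref{thm:gaps}, and then transport the order reversal back. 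An alternative is to form $\alpha, \beta$ from Dehn-twist combinations of $\gamma_i$ around $\gamma_j$ (and vice versa), whose lengths grow linearly with the twist count and can be tuned uniformly in $M$ to cross the threshold $p\,\ell(\gamma_j)$ vs. $q\,\ell(\gamma_i)$.

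Once $\alpha$ and $\beta$ are in hand, the displayed strict inequalities translate to $\ell_{M_1}(\alpha) > \ell_{M_1}(\beta)$ and $\ell_{M_2}(\alpha) < \ell_{M_2}(\beta)$, so the marked length orderings on $M_1$ and $M_2$ disagree on at least one pair. The contrapositive is established, and the corollary follows: the full marked order on the simple length spectrum pins down a unique point of $\T_{g,n}$.
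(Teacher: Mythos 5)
Your top-level strategy is exactly the paper's: reduce the corollary to producing, for any two distinct marked surfaces $M_1\neq M_2$, a pair of simple closed curves whose length order is reversed, and extract such a pair from the projectively injective map of Theorem \ref{thm:proj}. The reduction to a pair $i,j$ with $\ell_{M_1}(\gamma_i)/\ell_{M_1}(\gamma_j)\neq \ell_{M_2}(\gamma_i)/\ell_{M_2}(\gamma_j)$ and a separating rational $p/q$ is correct (and note that if the order of $\gamma_i,\gamma_j$ themselves already flips between $M_1$ and $M_2$ you are done with no further work; the construction is only needed when both ratios sit on the same side of $1$).

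The gap is precisely in the step you flag as the main obstacle, and of the two routes you offer, the first cannot work as stated: Scott's theorem unwraps $\gamma_i^q$ and $\gamma_j^p$ into simple loops on a \emph{finite cover}, which is a different surface in a different Teichm\"uller space; an order reversal among simple closed geodesics of the cover says nothing about the marked order of the simple length spectrum of $M_1$ and $M_2$ themselves, which is what the corollary concerns. The Dehn-twist route is the viable one (and is essentially what \cite{pamcmul} does), but your phrasing overreaches: you neither need nor have control ``uniformly in $M$'' --- the error term in $\ell_M(T_{\gamma}^{n}\delta)=n\,i(\delta,\gamma)\,\ell_M(\gamma)+O(1)$ depends on $M$. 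What saves the argument is that only the two fixed surfaces matter: pick a simple closed curve $\delta$ meeting both $\gamma_i$ and $\gamma_j$ essentially, set $\alpha_n=T_{\gamma_i}^{qn}\delta$ and $\beta_n=T_{\gamma_j}^{pn}\delta$ (these are simple, being homeomorphic images of $\delta$), and observe that $\ell_M(\alpha_n)/\ell_M(\beta_n)$ tends, at each fixed $M$, to $\bigl(q\,i(\delta,\gamma_i)\,\ell_M(\gamma_i)\bigr)/\bigl(p\,i(\delta,\gamma_j)\,\ell_M(\gamma_j)\bigr)$; the constant factor $i(\delta,\gamma_i)/i(\delta,\gamma_j)$ is a positive rational independent of $M$ and can be absorbed into the choice of $p/q$. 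A single sufficiently large $n$ then gives the strict reversal at both $M_1$ and $M_2$, and the contrapositive, hence the corollary, follows as you say.
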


One of the motivations of this study was to study the nature of the following conjecture, attributed to Rivin \cite{sc98}.

\begin{conjecture}[Rivin]
Multiplicity in the simple length spectrum is bounded above by a constant which depends on the topology of the surface.
\end{conjecture}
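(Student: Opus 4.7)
As the final statement is explicitly a conjecture, the following is a proposed line of attack rather than a proof. The strategy is to combine the submanifold structure of the equal-length loci $\equal(\alpha,\beta)$ from the preceding subsection with the finiteness of mapping class group orbits of simple closed curves and a transversality argument.

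I would first reduce to counting within a single mapping class group orbit. There are only finitely many topological types of simple closed curves on $S_{g,n}$ up to the mapping class group action, so a uniform bound on multiplicity within each type, summed over the finite list of types, gives the conjectured bound. So fix one such orbit.

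Suppose then that $\gamma_1,\ldots,\gamma_k$ are distinct simple closed geodesics on $M\in\T_{g,n}$, all in the same orbit, with $\ell_M(\gamma_1)=\cdots=\ell_M(\gamma_k)$. Then $M$ lies in
\[
\equal(\gamma_1,\gamma_2)\cap\equal(\gamma_2,\gamma_3)\cap\cdots\cap\equal(\gamma_{k-1},\gamma_k),
\]
the intersection of $k-1$ real-analytic codimension-one submanifolds of $\T_{g,n}$. If one could show that the $k-1$ differentials $d\ell_{\gamma_i}-d\ell_{\gamma_{i+1}}$ are linearly independent at $M$, then the intersection would have codimension $k-1$, and non-emptiness would force $k-1\leq 6g-6+2n$, a topological bound of precisely the predicted form.

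The main obstacle is exactly this transversality assertion, which arguably carries the whole content of the conjecture. Analyticity of length functions and the submanifold theorem of the previous subsection give no information about linear dependences between length differentials of distinct simple closed curves, and Leininger's work shows that among non-simple curves such hidden dependences genuinely exist. The two most natural tools to attempt to rule them out in the simple case are the projectively injective map of Theorem \ref{thm:proj} — which already guarantees that $m$ carefully chosen simple length functions separate projective directions, and one would try to extend this to a general-position statement for any $6g-5+2n$ differentials from a single orbit — and the length expansion lemma, which excludes certain coincidences by independently perturbing boundary lengths of subsurfaces cut out by disjoint sub-collections of the $\gamma_i$. The genuine difficulty is that equal-length $\gamma_i$ need not be disjoint, nor even have controlled intersection pattern, and combining these ingredients sharply enough to handle all mutually intersecting configurations is, to my knowledge, the open difficulty.
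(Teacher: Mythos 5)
The statement you were asked about is labelled a conjecture in the paper, and the paper offers no proof of it: it is presented as open, its once-punctured-torus specialization at the modular torus is noted to be \emph{equivalent} to the Markov (Frobenius) uniqueness conjecture in number theory, and Schmutz Schaller's remark is quoted that no single surface was even known on which simple multiplicity is bounded. So there is no argument in the paper to compare yours against, and you were right to present a line of attack rather than a proof. That said, your proposed line of attack is not merely incomplete; it aims at a bound that is already known to be false.

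Concretely, your strategy, if the transversality step went through, would yield $k-1\leq \dim \T_{g,n}=6g-6+2n$ for any $k$ simple closed geodesics of equal length lying in one mapping class group orbit, hence a bound linear in the complexity after summing over orbits. But the surfaces discussed in section \ref{sec:systoles} violate this. The Bolza surface in genus $2$ has $12$ systoles, i.e.\ $12$ distinct simple closed geodesics of equal length, all non-separating and therefore in a single mapping class group orbit, while $\dim\T_{2,0}=6$; so at that point of $\equal(\gamma_1,\gamma_2)\cap\cdots\cap\equal(\gamma_{11},\gamma_{12})$ the eleven differentials $d\ell_{\gamma_i}-d\ell_{\gamma_{i+1}}$ cannot be linearly independent. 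Worse, Schmutz Schaller's families have a number of systoles growing at least like $(-\chi)^{4/3}$, which outstrips any linear function of $6g-6+2n$; these are exactly the ``lower bounds which depend on topology'' the paper alludes to just after stating the conjecture. So the linear-independence assertion you identify as ``the open difficulty'' is not an open difficulty but a false statement: equal-length loci for simple curves do intersect very non-transversally at highly symmetric surfaces, and any correct bound must be permitted to exceed the dimension of Teichm\"uller space. The paper's framing (the equivalence with Markov uniqueness, and the transcendence condition in remark \ref{rem:modtori} under which particular one-holed tori are known to satisfy conjecture \ref{con:ss}) strongly suggests that the obstruction is arithmetic in nature and will not be removed by a general-position or transversality argument of the kind you propose.
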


It should be mentioned that lower bounds which depend on topology have been established in the work of Schmutz Schaller in his investigation of surfaces with a large number of systoles (see section \ref{sec:systoles}). Also due to Schmutz Schaller is the following conjecture \cite{sc98}.

\begin{conjecture}[Schmutz Schaller]\label{con:ss}
Multiplicity is bounded by $6$ in the particular case of once-punctured tori.
\end{conjecture}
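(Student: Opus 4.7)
The plan is to exploit the explicit trace structure of simple closed geodesics on the once-punctured torus, together with the two-dimensionality of $\T_{1,1}$. First I would parametrize: simple closed geodesics on $T_{1,1}$ correspond bijectively to primitive pairs $(p,q)\in\Z^2/{\pm 1}$, i.e.\ to slopes $p/q\in\mathbb{Q}\cup\{\infty\}$ indexing the vertices of the Farey tessellation. Fixing generators $A,B$ of $\pi_1$ with $[A,B]$ parabolic, one has $2\cosh(\ell_{p,q}(M)/2)=|\tr W_{p,q}|$ for a word $W_{p,q}$ read off from the continued-fraction expansion of $p/q$. The essential algebraic input is the Markov-type relation: for any three slopes that are vertices of a Farey triangle, the traces $x,y,z$ of the corresponding words satisfy
\[
x^2+y^2+z^2-xyz=\mu(M),
\]
with $\mu(M)$ depending only on $M$ (through $\tr[A,B]$, equivalently through the cusp).

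Given this, I would attempt a Vieta-jumping counting argument. Suppose $\gamma_{p_1,q_1},\dots,\gamma_{p_N,q_N}$ all have length $L$, and set $t=2\cosh(L/2)$. Using the Markov identity to propagate across the dual tree of the Farey tessellation, for any Farey-adjacent pair among our slopes the third vertex completing a Farey triangle has trace given by one of the two roots of a quadratic determined by $t$ and $\mu(M)$. The tree structure of the Farey graph together with this two-root ambiguity should force the set of length-$L$ slopes to be combinatorially very restricted. Contributions from isometries are already accounted for: the universal hyperelliptic involution of $T_{1,1}$ acts as $-1$ on $H_1$ and hence trivially on unoriented simple closed curves, so multiplicity arises from genuinely non-isometric coincidences together with the effective isometry action, which on the modular torus has order $6$ and produces the extremal configuration predicted by the conjecture.

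The hardest step is sharpness, namely ruling out multiplicity $\geq 7$. Theorem \ref{thm:baire} shows that the coincidence locus is meagre, but controlling coincidences pointwise is a considerably stronger requirement. The problem reduces to a purely arithmetic question: for fixed $\mu$ and a fixed target trace $t$, how many slopes $p/q$ satisfy $|\tr W_{p,q}|=t$? For the modular torus this is tightly linked to the uniqueness of Markov triples, which remains notoriously open, so a complete proof appears to require either progress on or a clever bypass of that question. A realistic intermediate target would be to establish an explicit, not necessarily sharp, uniform bound on multiplicity, which the Farey/Vieta combinatorics above should deliver without resolving the sharp constant $6$.
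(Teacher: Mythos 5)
The statement you have been asked to prove is not a theorem of the paper but an open conjecture, and your proposal does not (and could not reasonably be expected to) close it: as the paper itself explains, the case of the modular torus is equivalent to the Markov uniqueness conjecture of Frobenius, open since 1913. Your reduction is the standard and correct one --- slopes indexed by the Farey tessellation, traces satisfying a Fricke--Markov identity $x^2+y^2+z^2-xyz=\mu(M)$ on each Farey triangle, Vieta jumping along the dual tree --- and you are right that for the modular torus ($\mu=0$, traces equal to three times the Markov numbers) the multiplicity question becomes exactly the Frobenius question. But this is where the genuine gap sits: the two-root ambiguity at each Farey triangle tells you how to \emph{generate} all traces from a seed triple; it gives no upper bound whatsoever on how many distinct slopes can realize a single trace value $t$, and that count \emph{is} the multiplicity. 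Nothing in the tree combinatorics prevents the value $t$ from reappearing in far-apart branches of the Markov tree, which is precisely what the uniqueness conjecture asserts does not happen.

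Your fallback claim --- that the Farey/Vieta combinatorics ``should deliver'' an explicit, non-sharp uniform bound on multiplicity --- is not substantiated and contradicts the state of the art recorded in the paper: Schmutz Schaller remarked that there is \emph{no} punctured or closed surface for which multiplicity in the simple length spectrum is even known to be bounded, and the paper reiterates that such an example ``remains to be shown.'' The only surfaces currently known to have bounded simple multiplicity are the one-holed tori of Remark \ref{rem:modtori} with $\cosh\frac{\ell(\gamma)}{2}$ transcendental, and there boundedness comes from a transcendence argument applied to the trace polynomials, not from Farey combinatorics; that argument is unavailable for the cusped (punctured) torus, where $\mu$ is rational. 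Your heuristic for the constant $6$ via the order-$12$ isometry group of the modular torus is a sensible explanation of why $6$ is the conjectured value, but it only produces a lower bound on the extremal multiplicity, not an upper bound for arbitrary punctured tori. In short: the setup is right, the honest identification of the obstruction is right, but no step of the proposal actually bounds the multiplicity, so there is no proof here --- consistent with the fact that the paper states this only as a conjecture.
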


In fact, this is a geometric generalization to the well-known Markov uniqueness conjecture in number theory \cite{fr13} which is in fact equivalent to whether Schmutz Schaller's conjecture is satisfied by the modular torus. 

\begin{conjecture}[Frobenius] A solution $(a,b,c)$ of positive integers to the Markov cubic
\begin{eqnarray}
a^{2} + b^{2}+ c^{2} - 3abc = 0
\end{eqnarray}
is uniquely determined by $\max\{a,b,c\}$.
\end{conjecture}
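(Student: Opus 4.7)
The plan is to exploit the classical Vieta jumping symmetry of the Markov cubic. Viewing the equation as a quadratic in $a$ with $b,c$ fixed, its two roots are related by $a \mapsto 3bc - a$, so from any Markov triple one obtains another by applying this involution in any single coordinate. Starting from $(1,1,1)$ these moves generate a trivalent tree whose vertices exhaust the solution set, and reflection in the largest coordinate always yields a triple with a strictly smaller maximum. This infinite descent reduces the Frobenius conjecture to the statement that no two distinct unordered Markov triples share the same largest coordinate.

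Next I would reformulate uniqueness arithmetically. For a Markov triple $(a,b,c)$ with $c$ the maximum, reducing the defining equation modulo $c$ gives $a^2+b^2 \equiv 0 \pmod{c}$. A short induction along the Markov tree shows that the coordinates of any triple are pairwise coprime, so $\gcd(ab,c)=1$ and $(ab^{-1})^2 \equiv -1 \pmod{c}$. Each triple with maximum $c$ therefore determines a square root of $-1$ in $\Z/c\Z$; conversely, each such root, together with the size constraint $0 < a,b < c$, pins down at most one candidate pair whose validity as a Markov pair can then be tested. The conjecture now becomes: for each $c$ occurring as a largest Markov coordinate, exactly one square root of $-1$ modulo $c$ (up to sign) corresponds to an honest Markov pair.

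When $c$ has few distinct prime factors, the number of square roots of $-1$ modulo $c$ is small and the conjecture can be verified directly. By the Chinese Remainder Theorem this number is a power of $2$ controlled by the odd prime divisors of $c$, so when $c$ is a prime power (or twice, or four times, a prime power) essentially only one candidate remains and the conjecture follows. This recovers the partial results of Baragar, Button, and others. In the general case one tries to combine the size constraint with finer arithmetic information about the tree-theoretic recursion for Markov numbers in order to eliminate the extraneous candidates.

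The main obstacle, and the reason the conjecture remains open, is precisely this last step: when $c$ has many distinct prime factors the square roots of $-1$ modulo $c$ become numerous, and no known technique rules out all but one of them. The geometric route via Conjecture~\ref{con:ss} reframes this arithmetic obstruction as a Teichm\"uller-theoretic one, requiring that on the modular torus no seven distinct simple closed geodesics share a common length. Either path seems to require a genuinely new idea beyond what the Vieta jumping reduction provides, and I would expect the heart of any proof to lie in extracting arithmetic or geometric rigidity from the recursive structure exposed by the tree.
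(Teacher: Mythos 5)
The statement you were asked about is not a theorem of the paper but an open conjecture: the paper merely states the Frobenius (Markov uniqueness) conjecture, notes its equivalence --- via the modular torus --- with the restriction of Schmutz Schaller's Conjecture~\ref{con:ss} to that surface, and points to the literature; it offers no proof, and indeed uses the conjecture's ``apparent difficulty'' as evidence that the geometric conjecture is hard. So there is no proof in the paper to compare yours against, and your proposal, to its credit, does not pretend to supply one.

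That said, your survey of the standard reductions is essentially accurate. The Vieta-jumping involution $a\mapsto 3bc-a$, the resulting trivalent Markov tree rooted at $(1,1,1)$, the pairwise coprimality of the entries, and the consequent congruence $(ab^{-1})^2\equiv -1\pmod{c}$ for the maximal coordinate $c$ are all correct and are the classical route to the known partial results (uniqueness when $c$ is a prime power, or twice or four times one). Your final paragraph correctly locates the obstruction: when $c$ has many odd prime factors the square roots of $-1$ modulo $c$ proliferate and no known argument eliminates the spurious candidates; this is precisely why the problem is open. The one thing to flag is that your text is a research program, not a proof --- the ``step that would fail'' is the last one, and it fails for everyone, not just for you. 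If you wanted to connect your sketch more tightly to this paper, the relevant observation is that the six-fold multiplicity in Conjecture~\ref{con:ss} reflects the order-$12$ isometry group of the modular torus acting on the simple closed geodesics attached to a single Markov triple, so the geometric and arithmetic formulations of uniqueness are genuinely the same statement rather than one implying the other.
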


We refer the reader to \cite{pamcmuls,sc98} and references therein on known results concerning this conjecture. Let us note however that the apparent difficulty of the Markov uniqueness conjecture suggests that Schmutz Schaller's conjecture is very difficult. Furthermore, Schmutz Schaller \cite{sc98} remarked that there were no known surfaces where one knows that multiplicity in the simple length spectrum was even bounded. Note that by theorem \ref{thm:baire} most surfaces have simple multiplicity bounded by $1$, but whether or not a given surface has this property can be a difficult question. In \cite{pamcmuls}, a family of examples of one holed tori with multiplicity bounded by $6$ are given and will be discussed in propostion \ref{prop:maxtorus} and remark \ref{rem:modtori}. However, an example of either a punctured or a closed surface with bounded simple multiplicity remains to be shown.

\section{Short curves: systolic and Bers' constants}

The simple length spectrum has not nearly been studied as much as the full length spectrum. However, one particular length, namely the length of a shortest closed geodesic or the systole length, has been the object of numerous articles and investigations. The study of this function, largely developed by Schmutz Schaller (see for instance \cite{sc98} and references therein), continues to be a subject of active study. Another length function which has proved useful in the study of Teichm\"uller space is the length of the shortest pants decomposition of a surface. Both of these quantities are bounded by constants that depend only on the topology of the surfaces (and not on the metrics themselves). Here we'll try to compare some of the techniques and results used to study both problems.

\subsection{Systolic constants}\label{sec:systoles}

Consider a hyperbolic surface of genus $g$ with $n$ cusps, or if one is ambitious, with $n$ boundary geodesics. We define the systole to be the length of the shortest closed geodesic which does not belong to the boundary. Unless the surface is a pair of pants, such a curve is always simple. We define the systole function $\sys(S)$ to be the length of the systole of a surface. This gives an interesting function over Teichm\"uller space as is portrayed by the following theorem of Akrout \cite{ak03}.

\begin{theorem}[Akrout] 
The systole of Riemann surfaces is a topological Morse function on the Teichm\"uller space.
\end{theorem}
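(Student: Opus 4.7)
The plan is to follow the classical framework for min-type functions developed by Voronoi, Ash and Bavard in the Minkowski theory of lattices, and to adapt it to the Teichm\"uller setting. The systole is continuous but not smooth, so one must establish a topological Morse local normal form near each critical point and a local submersion form at each regular point.

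First I would check that locally on $\T_{g,n}$ the systole is the minimum of only finitely many analytic length functions. By Property~\ref{pro:unique} each homotopy class contains a unique geodesic, and by Keen's collar lemma simple closed geodesics of length bounded by any fixed constant remain in a locally finite set, so near any $M$ only finitely many simple closed geodesics $\gamma_1,\ldots,\gamma_k$ realize the systole. Consequently $\sys = \min_{i=1,\ldots,k}\ell_{\gamma_i}$ on a neighborhood of $M$, and continuity on all of $\T_{g,n}$ follows.

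Next I would distinguish regular from critical points via the covectors $d\ell_{\gamma_1}(M),\ldots,d\ell_{\gamma_k}(M)$ in $T_M^{*}\T_{g,n}$. Call $M$ regular if there is a tangent vector $v$ with $d\ell_{\gamma_i}(v) < 0$ for every $i$. In that case $\sys$ strictly decreases along the flow of $v$, and a standard argument (using a partition of unity in the cone where the differentials order themselves by their values on $v$) produces a continuous local chart in which $\sys$ is a coordinate function, so no topological singularity occurs. Call $M$ critical in the opposite case, i.e.\ when $0$ lies in the convex hull of $\{d\ell_{\gamma_i}(M)\}$. This is the eutactic condition, parallel to Voronoi's perfection-plus-eutaxy criterion for critical lattices.

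The heart of the argument is to produce at each critical $M$ a continuous local change of coordinates in which $\sys(M) - \sys$ takes the standard topological Morse form $-x_1^{2} - \cdots - x_r^{2} + x_{r+1}^{2} + \cdots + x_d^{2}$. Linearizing, one has $\sys = \sys(M) + \min_i \bigl(d\ell_{\gamma_i}(M)\bigr)(\cdot) + O(\|\cdot\|^{2})$, so up to higher-order corrections $\sys$ is a concave piecewise-linear function; by Bavard's construction for eutactic families, such a function is locally topologically conjugate to a non-degenerate quadratic form whose index equals the dimension of the positive span of the critical covectors. The analytic remainders are then absorbed by a further homeomorphism, using that the linear model is structurally stable under small perturbations.

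The main obstacle is the non-degeneracy and remainder-absorption step: one must verify that the differentials of length functions associated to systoles at a critical point are always in a sufficiently generic configuration for Bavard's model to apply, and that the analytic second-order terms genuinely realize the Morse type dictated by the first-order convex geometry. This is the technical heart of Akrout's argument, and it is where the specific analytic structure of $\T_{g,n}$ and the Hessians of length functions enter crucially.
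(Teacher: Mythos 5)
Your overall strategy matches Akrout's (and the paper's outline): locally the systole is the minimum of finitely many analytic length functions, regular and critical points are separated by a eutaxy condition on the differentials, and one must produce a topological Morse normal form at each critical point. The local finiteness step is fine. But there is a genuine gap in your ``heart of the argument'': you claim that, up to higher-order corrections, $\sys$ is a concave piecewise-linear function which is already topologically conjugate to a non-degenerate quadratic form, with the analytic remainders ``absorbed by a further homeomorphism'' via structural stability. This cannot work as stated. At a critical point the differentials $d\ell_{\gamma_1}(M),\ldots,d\ell_{\gamma_k}(M)$ typically span only an $r$-dimensional subspace of $T_M^*\T_{g,n}$ with $r<\dim\T_{g,n}$; the piecewise-linear model $\min_i d\ell_{\gamma_i}(M)(\cdot)$ is then identically zero on a subspace of codimension $r$, hence degenerate, and a degenerate model is not structurally stable under perturbation. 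The positive part $+x_{r+1}^{2}+\cdots+x_{d}^{2}$ of the normal form must be produced by the second-order terms, so those terms cannot be treated as a removable remainder dictated by the first-order convex geometry alone.

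This is exactly where the one ingredient missing from your write-up enters: Wolpert's theorem \cite{wo87} that geodesic length functions have positive definite Hessians with respect to the Weil--Petersson metric. Akrout's general result concerns generalized systole functions on a manifold carrying a connection for which the Hessians of the competing smooth functions are positive definite; positivity in the directions transverse to the span of the differentials is what supplies the positive part of the quadratic normal form and forces the index of a topological Morse point to equal the rank of the eutactic configuration. You gesture at ``the Hessians of length functions enter crucially'' in your final paragraph, but without identifying the property actually needed (positive definiteness) and the metric with respect to which it holds (Weil--Petersson), the critical-point analysis does not close.
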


Note that this is the best one could hope for as it could not be a regular Morse function because the curve realizing systole length changes as one moves around Teichm\"uller space. To show this, Akrout shows a more general result concerning generalized systole functions which are functions on manifolds defined locally as the minimum of a finite number of smooth functions. He shows that if the manifold admits a connexion such that the Hessians of length functions are positive definite, such a function is a topological Morse function such that a Morse point of index $r$ is a eutatic point of rank $r$. Let us outline why this proves the above theorem. The manifold in our case is Teichm\"uller space endowed with the Weil-Petersson metric. In a neighborhood of a point, length functions are continuous and length spectra are discrete, there are only a finite number of simple closed curves that will realize the systole in the neighborhood, so the systole function satisfies the criteria of generalized systole functions. Furthermore, it is a result of Wolpert \cite{wo87} that length functions have positive definite Hessians with respect to the Weil-Petersson metric, and thus the result follows.\\

Although we begin the study of the systolic geometry of surfaces by Akrout's theorem, it should be noted that this is not chronologically correct. Schmutz Schaller had previously obtained partial results in this direction \cite{sc98,sc99} and had largely initiated a systematic study of systoles on hyperbolic surfaces and established a parallel with $n$-dimensional sphere packings. Bavard \cite{bav97}, by generalizing the study of Hermite invariants to systolic problems on manifolds, provided a theoretical framework to study extremal points of systole type functions. Using these parallels, Akrout's result is related to Ash's results \cite{as77,as80}. As in the case of sphere packings or Hermite invariants, one is interested in extremal points, local and global maxima, and these turn out to be difficult to find. First of all, by Mumford's compactness theorem, for each genus $g\geq 2$, there is a global maximum for the systole, which we shall denote $\sys(g,n)$ and $\sys(g)$ for closed surfaces.\\

\noindent{\it Exact values of systolic constants}\\

In the non-compact case, Schmutz Schaller proved that surfaces corresponding to quotients of $\Hyp$ by principal congruence subgroups of $\PSL_2(\Z)$ are in fact global maxima for the systole length of their corresponding signature. There are a number of other results known about low complexity cases, even with (fixed length) boundary geodesics \cite{sc931}.\\

For closed surfaces there is only one known value, in genus 2, a result of Jenni \cite{je84}. 

\begin{theorem}[Jenni] A systole $\sigma$ of a genus $2$ surface satisfies
$$
\cosh\frac{\ell(\sigma)}{2}\leq \sqrt{2}+1
$$
and equality occurs for a unique surface (up to isometry).
\end{theorem}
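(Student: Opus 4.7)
The plan is to combine existence of a maximiser via Mumford compactness, a eutaxy condition at the extremum coming from Akrout's theorem, a topological-symmetry reduction identifying the maximiser, and an explicit trigonometric computation.

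First, by Mumford compactness the systole function $\sys$ is proper on $\mathcal{M}_2$ and hence attains a global maximum at some surface $M$; let $\Sigma(M) = \{\gamma_1,\dots,\gamma_k\}$ be its set of systoles. By Akrout's theorem together with Bavard's eutaxy framework, the origin of $T_M^*\mathcal{T}_2$ must lie in the positive convex hull of the length-function gradients $\nabla\ell_{\gamma_i}(M)$: otherwise a Hahn--Banach separation produces a tangent direction along which every $\ell_{\gamma_i}$ strictly increases, contradicting maximality. Since $\dim\mathcal{T}_2 = 6$, this forces $k\geq 7$, and the gradients $\nabla\ell_{\gamma_i}$ must positively span the cotangent space.

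Next I would analyse which topological configurations of at least seven pairwise non-homotopic simple closed curves on $\Sigma_2$ can simultaneously realise the minimum length. The collar lemma~\ref{lem:collar} endows each systole with an embedded collar whose area grows with $\ell(\sigma)$, and Gauss--Bonnet caps the total area at $4\pi$; this forces most systole pairs to intersect in a highly constrained pattern. Combined with the invariance of $\Sigma(M)$ under the hyperelliptic involution (automatic on any genus $2$ surface), these constraints force $M$ to admit a large isometry group. Since the largest automorphism group attainable in genus $2$ is $\GL_2(\F_3)$ of order $48$, realised at a unique point of $\mathcal{M}_2$ (the Bolza surface), this identifies $M$ up to isometry and yields the uniqueness claim.

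To extract the explicit value, I would realise $M = \Hyp/\Gamma$ with fundamental domain a regular hyperbolic octagon $P$ whose eight interior angles each equal $\pi/4$, so that the vertices identify to a single smooth point with total angle $2\pi$. Opposite sides of $P$ are glued by hyperbolic translations whose axes pass through the midpoints of the two identified sides, with translation length $2r$ where $r$ is the apothem of $P$. Applying the right-triangle formulas to the triangle with vertices at the centre of $P$, a vertex of $P$, and the midpoint of an adjacent side (angles $\pi/8$, $\pi/8$, $\pi/2$) yields
$$
\cosh r \;=\; \frac{\cos(\pi/8)}{\sin(\pi/8)} \;=\; \cot(\pi/8) \;=\; 1+\sqrt{2},
$$
so $\cosh(\ell(\sigma)/2) = \cosh r = 1+\sqrt{2}$, as required.

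The hard part is the second step: eutaxy is a purely linear algebraic condition on gradients, whereas the goal is a rigid geometric identification of the Bolza surface. Bridging this gap requires a careful enumeration of how seven or more systoles can be arranged combinatorially on $\Sigma_2$, together with a rigidity statement that any configuration admitting the forced symmetry group determines the hyperbolic structure uniquely. Once this identification is in place, both the inequality $\cosh(\ell(\sigma)/2)\leq 1+\sqrt{2}$ and the equality characterisation follow from the trigonometric computation above.
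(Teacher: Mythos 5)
The paper does not actually prove this statement: it records Jenni's theorem as a known result and points to \cite{je84}, \cite{sc931} and \cite{bav92} for proofs, so there is no internal argument to compare yours against. Judged on its own terms, your outline has a sound beginning and end but a genuine gap exactly where you say ``the hard part'' is, and that gap is essentially the whole theorem. The existence of a maximiser via Mumford compactness, the eutaxy/perfection condition forcing at least $7 = \dim\T_{2,0}+1$ systoles whose gradients positively span the cotangent space (using Wolpert's convexity to handle directions in which no first-order decrease occurs), and the final octagon computation $\cosh r = \cot(\pi/8) = 1+\sqrt{2}$ are all correct. But the passage from ``$M$ has at least seven systoles, pairwise intersecting at most once, with collars fitting into area $4\pi$, and the systole set is invariant under the hyperelliptic involution'' to ``$M$ has automorphism group of order $48$, hence is the Bolza curve'' is asserted, not proved. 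Nothing in the eutaxy condition or the collar/area estimates forces a large isometry group: a surface can have many systoles and only the hyperelliptic symmetry. Even granting extra symmetry, ``large isometry group'' does not single out the maximal one, and you would still need a rigidity statement that the configuration determines the hyperbolic structure. This enumeration of admissible systole configurations in genus $2$ is precisely the content of Jenni's and Schmutz's proofs and cannot be waved through. You should also verify, in the last step, that the twelve geodesics of length $2r$ through the centre of the regular octagon really are the shortest closed geodesics on that surface.

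A cleaner route, which the paper itself sketches when discussing Bavard's proof, is to use the fact that every genus $2$ surface is hyperelliptic: the quotient by the hyperelliptic involution is a hyperbolic sphere with six cone points of angle $\pi$ and area $2\pi$, every non-separating simple closed geodesic descends to a simple arc between two distinct cone points of half the length (and separating geodesics are too long to be systoles). The systole problem thus becomes a packing problem for six cone points on a fixed-area orbifold, where an embedded-disk/area argument gives the bound $\cosh(\ell/2)\leq 1+\sqrt 2$ directly and the equality analysis identifies the unique extremal configuration, which lifts to the Bolza surface. That approach replaces your combinatorial enumeration of systole configurations on $\Sigma_2$ by a much more tractable two-dimensional packing estimate, and is the one I would recommend developing if you want a complete proof.
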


The surface that attains the optimal bound in genus 2 is the so-called Bolza curve, and is also the genus 2 surface with the highest number of conformal self-isometries (it has $48$). Since then there have been other proofs of Jenni's result, namely \cite{sc931} where among many results, one finds a complete list of all critical values of $\sys$ in genus 2.\\

\begin{figure}[h]
\begin{center}
\includegraphics[width=12cm]{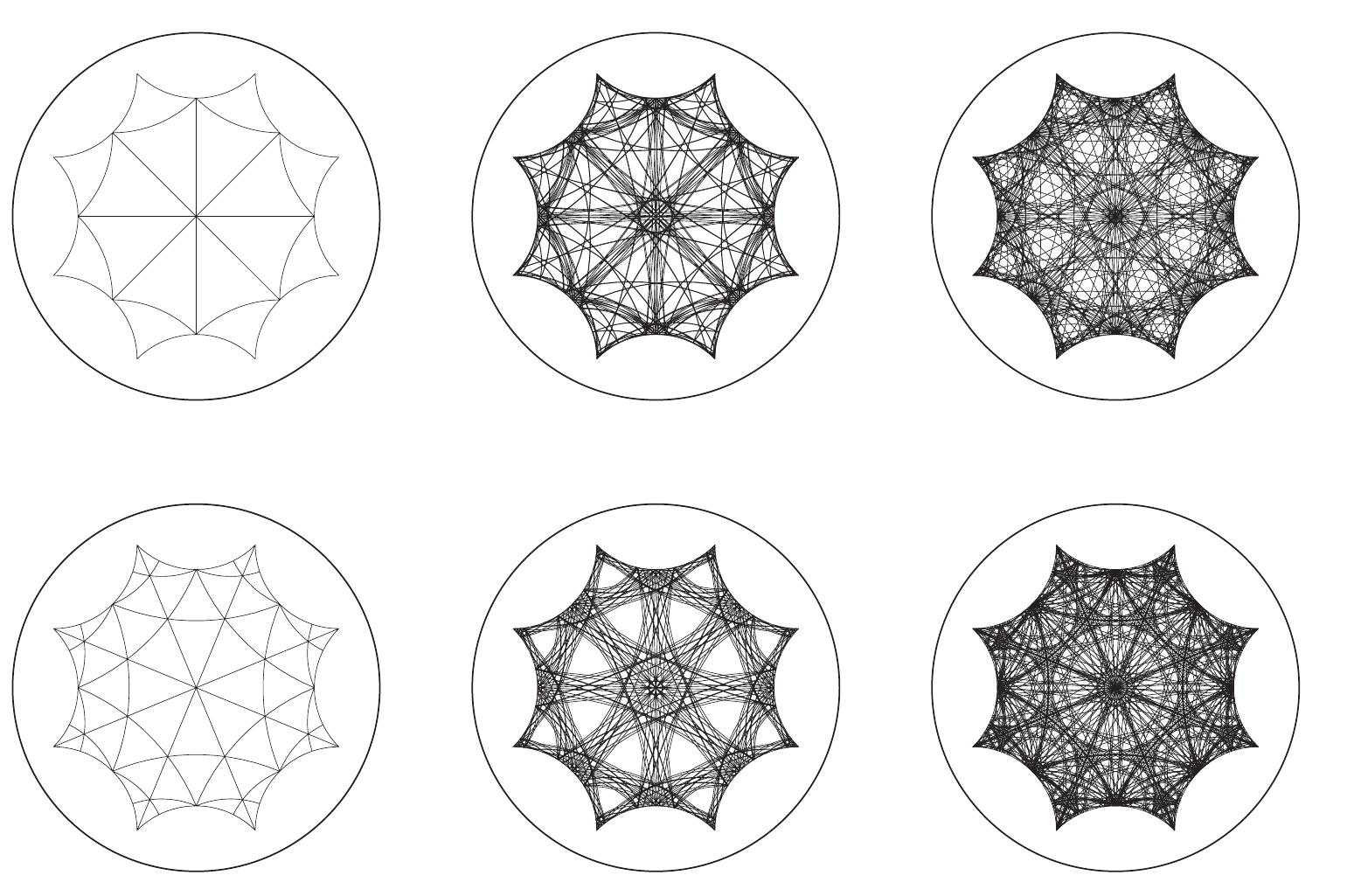}
\caption{The Bolza surface: on the upper left, one sees the 12 systoles, and on the lower right one sees the gaps from theorem \ref{thm:gaps}. See Peter Buser's article \cite{buanogia} for a description of how this picture was made.}
\label{fig:CV}
\end{center}
\end{figure}

An another proof is given by Bavard \cite{bav92} where the problem of the maximal systole among hyperelliptic surfaces is considered. He finds the maximal values in genus 2 and 5, and describes the surfaces which attain the bounds (and shows that they are unique up to isometry). As all genus 2 surfaces are hyperelliptic, the result coincides with Jenni's. For higher genus hyperelliptic surfaces, consider the quotient of the surface by the hyperelliptic involution. One obtains a hyperbolic sphere with $2g+2$ cone points of angle $\pi$ (corresponding to the Weierstrass points of the surface). A simple geodesic path of length $\ell$ between two distinct cone points lifts to a simple closed geodesic passing through two Weierstrass points on the closed surface above, of length $2\ell$. Finding optimal bounds for this problem is a type of hyperbolic equivalent to the well known Tammes problem on the Euclidean sphere (only known for certain numbers of points), and there is no reason to believe that it is any easier.\\

In genus 3, Klein's quartic is the surface with the most conformal self-isometries (it attains Hurwitz's upper bound of $84(g-1)=168$ but it fails to be maximum for the systole function. Schmutz Schaller has conjectured that another surface, also with a large number of symmetries, is maximal (the so-called $M(3)$ surface explicitly described in \cite{sc931}). 

\begin{conjecture}[Schmutz Schaller] A systole $\sigma$ of a surface in genus $3$ satisfies $\cosh\frac{\ell(\sigma)}{2}\leq 2+\sqrt{3}$ with equality occurring for a unique surface up to isometry.
\end{conjecture}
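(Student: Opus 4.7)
The plan is to combine Akrout's theorem with a combinatorial-geometric enumeration, following the general pattern of Jenni's and Bavard's genus $2$ proofs but on a significantly larger scale. The global maximum exists by Mumford's compactness theorem and, by Akrout's theorem, must be a topological Morse critical point of index $0$, which in the Bavard/Schmutz Schaller framework means a \emph{eutactic} configuration of systoles. Since $\dim \T_3 = 6g-6 = 12$, at any such maximum at least $12$ distinct homotopy classes of simple closed curves must simultaneously realize $\sys$, and the differentials of their length functions must positively span the cotangent space. The first step is therefore to enumerate, up to the mapping class group, the multi-curves on a closed genus $3$ surface that could carry a eutactic system of $12$ or more equal-length simple closed geodesics, using filling, intersection patterns, and the collar lemma as initial filters.

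For each candidate combinatorial type I would cut the surface along the proposed system of systoles. The complementary regions are right-angled hyperbolic polygons with half of their edges of length $\ell(\sigma)/2$, and hyperbolic trigonometry combined with the gluing/closure conditions translates the requirement that every curve in the system have length exactly $\ell(\sigma)$ into a system of equations in one parameter. Solving this produces, for each combinatorial type, a sharp bound of the form $\cosh(\ell(\sigma)/2) \leq f(\text{type})$. The surface $M(3)$ of \cite{sc931} should arise from a highly symmetric polygonal tiling whose trigonometric equation admits $2+\sqrt{3}$ as its solution; one then verifies that this hyperbolic structure genuinely realizes a eutactic point, and that it is strictly better than every competing combinatorial type. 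Uniqueness up to isometry would follow from the rigidity of eutactic points: once the combinatorial type and the length $\ell(\sigma)$ are fixed, the Fenchel--Nielsen parameters are determined by the gluing equations.

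The main obstacle is the combinatorial enumeration. In genus $2$, the hyperelliptic involution (which every surface possesses) drastically reduces the list of candidate systole configurations, and the computations essentially reduce to a single polygonal model; in genus $3$ neither the automorphism group nor the topological type of the systole system is forced, and the number of plausible maxima is large. A secondary difficulty is converting the collar lemma into a sharp enough quantitative bound: it controls how many curves of a given length can coexist, but the slack in the estimate is normally too large to isolate $M(3)$ without a careful trigonometric analysis case by case. Finally, one must distinguish global from merely local maxima, ruling out exotic configurations whose Morse index is $0$ but which compete numerically with $M(3)$ — a problem already visible in Schmutz Schaller's work on non-compact signatures, and the principal reason the conjecture has remained open.
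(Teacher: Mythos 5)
This statement is an open conjecture: the paper does not prove it and does not claim to. It only records the partial results of Schmutz Schaller from \cite{sc931} --- that $M(3)$ is a \emph{local} maximum of $\sys$, and that certain bounded subsurfaces of $M(3)$ (e.g.\ one-holed tori containing three systoles, as in Proposition \ref{prop:maxtorus}) are optimal in their configuration --- and explicitly frames the global statement as a ``yes or no type question'' that remains unresolved. So there is no proof in the paper to compare yours against, and your text, by its own admission in the final paragraph, is a research program rather than a proof: the entire mathematical content is deferred to an unexecuted enumeration of candidate systole configurations and a case-by-case trigonometric analysis, which is precisely the step the paper identifies as intractable (``the combinatorics of the problem become quickly out of hand'').

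Beyond the fact that nothing is actually proved, two technical points in your setup need correction. First, a global maximum of a topological Morse function on $\T_3$ has Morse index equal to $\dim\T_3 = 6g-6 = 12$, not index $0$ (the index counts negative directions of the Hessian, and at a maximum all $12$ directions are negative); in Akrout's correspondence this is a eutactic point of rank $12$. Second, for the differentials of the systole length functions to \emph{positively} span a $12$-dimensional cotangent space you need at least $13$ of them, not $12$. Neither error is the real obstruction --- the real obstruction is that the reduction to finitely many combinatorial types, the verification that each type yields a bound no better than $2+\sqrt{3}$, and the elimination of competing local maxima are all unproven, and collectively constitute the open problem itself.
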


Although this is a yes or no type question, he has attained partial results which include the fact that it is a local maximum \cite{sc931} and that certain subsurfaces of this surface (with boundary) are optimal in their configuration. In order to give an idea on one of this last result, consider the following toy problem. one considers a one-holed torus with a boundary geodesic of length $\ell$. Among all such surfaces, which one has maximal systole length? The solution to this problem requires only cut and paste techniques and hyperbolic trigonometry. This same proposition, expressed differently and without proof, can be found in \cite{sc931}. We give a full proof as an illustration of some of the difficulties of these techniques in higher surface complexity.

\begin{proposition}\label{prop:maxtorus}
Let $\Q$ be a surface of signature $(1,1)$ with boundary geodesic
$\gamma$. Then $\Q$ contains a simple closed geodesic $\delta$
satisfying
$$
\cosh  \frac{\ell(\delta)}{2} \leq \cosh \frac{\ell(\gamma)}{6}+
\frac{1}{2}.
$$
This bound is sharp and for a given length of $\gamma$, there is a unique surface up to isometry which reaches this bound. 
\end{proposition}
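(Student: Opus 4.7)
My plan is to use the Fricke trace identity for a Farey triple of simple closed geodesics on the one-holed torus and then reduce the problem to an elementary quadratic inequality which is sharp precisely at the symmetric configuration. Fix any three simple closed geodesics $\delta_1,\delta_2,\delta_3$ on $\Q$ pairwise intersecting exactly once (a Farey triple; such triples exist in abundance). Realizing them as elements $A_1,A_2\in\mathrm{SL}_2(\R)$ with $A_3=A_1A_2$ of the holonomy representation of $\pi_1(\Q)$, the boundary $\gamma$ corresponds to $[A_1,A_2]$, whose trace is $-2\cosh(\ell(\gamma)/2)$. The classical Fricke identity $\tr[A,B]=\tr(A)^2+\tr(B)^2+\tr(AB)^2-\tr(A)\tr(B)\tr(AB)-2$ then gives, setting $x_i:=2\cosh(\ell(\delta_i)/2)$ and $K:=2-2\cosh(\ell(\gamma)/2)$, the relation
\begin{equation*}
x_1^2+x_2^2+x_3^2-x_1x_2x_3\;=\;K. \tag{$\star$}
\end{equation*}
Writing $c:=\cosh(\ell(\gamma)/6)$ and $x^*:=2c+1$, the triple-angle formula $\cosh(3u)=4\cosh^3u-3\cosh u$ yields the factorization $K=-2(c-1)(2c+1)^2=3(x^*)^2-(x^*)^3$, so $(x^*,x^*,x^*)$ is the distinguished symmetric solution of $(\star)$.

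To exploit $(\star)$, I will use Markov-type mutation. The $\mathrm{SL}_2$-identity $\tr(A)\tr(B)=\tr(AB)+\tr(AB^{-1})$ shows that replacing $\delta_3$ by the other Farey-neighbor of the edge $\{\delta_1,\delta_2\}$ (still a simple closed geodesic on $\Q$) changes $x_3$ to the second root $x_3':=x_1x_2-x_3$ of the quadratic $(\star)$ in $x_3$. So given any sorted Farey triple $x_1\le x_2\le x_3$ on $\Q$, whenever $x_3>x_1x_2/2$ the mutation strictly decreases the largest entry. Since the length spectrum of $\Q$ is discrete, iterating must terminate at a \emph{reduced} sorted triple satisfying $x_3\le x_1x_2/2$.

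I claim that any reduced sorted triple satisfies $x_1\le x^*$. Being reduced, $x_3=\tfrac12(x_1x_2-\sqrt D)$ with $D=(x_1x_2)^2-4(x_1^2+x_2^2-K)$. The sorting condition $x_3\ge x_2$ is $x_2(x_1-2)\ge\sqrt D$, and squaring (using $x_1\ge 2$) simplifies to $x_1^2-(x_1-2)x_2^2\ge K$. For $x_1>2$ the left-hand side is strictly decreasing in $x_2$, so its maximum over $x_2\ge x_1$ is attained at $x_2=x_1$ where it equals $f(x_1):=3x_1^2-x_1^3$. Since $f$ is strictly decreasing on $[2,\infty)$ with $f(x^*)=K$, the required inequality $f(x_1)\ge K$ is equivalent to $x_1\le x^*$. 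Thus $\cosh(\ell(\delta_1)/2)\le\cosh(\ell(\gamma)/6)+\tfrac12$, which is the asserted bound with $\delta:=\delta_1$.

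Equality $x_1=x^*$ forces $x_2=x_1$ in the chain above, and then $(\star)$ forces $x_3=x^*$ too; the extremal reduced triple is therefore the symmetric $(x^*,x^*,x^*)$. This corresponds to the unique (up to isometry) hyperbolic one-holed torus carrying a $\Z/3$-rotational isometry cyclically permuting three equal-length systoles; uniqueness follows from the fact that the three length functions of a fixed Farey triple locally parameterize $\T_{1,1}(\ell(\gamma))$, so the extremal coordinates $(x^*,x^*,x^*)$ pin down a single hyperbolic structure. The principal obstacle is the algebraic identity $K=3(x^*)^2-(x^*)^3$, which the triple-angle factorization $4\cosh^3u-3\cosh u-1=(\cosh u-1)(2\cosh u+1)^2$ delivers directly; beyond this, the argument is standard Markov bookkeeping combined with the elementary quadratic manipulation above.
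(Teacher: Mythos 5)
Your argument is correct in substance but follows a genuinely different route from the one in the text. The proof given here is synthetic--trigonometric: one cuts $\Q$ along the shortest geodesic $\delta$, computes the perpendicular arc $c$ of the resulting pair of pants via the right-angled hexagon and pentagon formulas, observes that the second shortest geodesic $\delta'$ meets $\delta$ exactly once and is longest at the half-twist gluing, and converts $\ell(\delta)\leq\ell(\delta')$ into the cubic $2x^3-3x^2+1-C\leq 0$. You instead work on the character variety: the Fricke identity turns the problem into the relative Markov equation $x_1^2+x_2^2+x_3^2-x_1x_2x_3=2-2\cosh\frac{\ell(\gamma)}{2}$, and Vieta descent plus an elementary estimate on reduced sorted triples yields the same bound $x_1\leq 2\cosh\frac{\ell(\gamma)}{6}+1$. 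Your computations check out, including the triple-angle factorization and the positivity of $\tr(A_1A_2^{-1})$ (forced because the two roots of the quadratic in $x_3$ have positive product $x_1^2+x_2^2-K$). What your approach buys is a direct link to the Markov--Frobenius picture discussed elsewhere in this chapter (at the cusp, $K=0$ and $x^*=3$, recovering the fundamental triple of the modular torus); the trigonometric proof buys an explicit geometric description of the extremal surface (half-twist gluing, the three systoles of Remark \ref{rem:modtori}) essentially for free. One bookkeeping remark: when $x_2=x_3$ a single mutation does not strictly decrease the largest entry; it is the sum $x_1+x_2+x_3$ that strictly decreases, and discreteness of the simple length spectrum then terminates the descent.

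Two points in the sharpness and uniqueness discussion need to be made explicit. First, to show the bound is attained you must verify that on the symmetric surface the systole really equals $x^*$, i.e.\ that no simple closed geodesic there is shorter than the three curves of the symmetric triple; your descent only bounds reduced triples from above and says nothing in this direction. This follows from the reverse induction on the Farey tree rooted at $(x^*,x^*,x^*)$: every outward mutation replaces an entry by $x_ix_j-x_k$, and for instance $x_2x_3-x_1\geq x_2(x_3-1)\geq x_3$, so all entries of all triples remain at least $x^*$, and every simple closed curve occurs in some triple. The existence of the symmetric surface itself also deserves a word, either via Fricke's realization of real characters with $\tr[A,B]<-2$ or by the explicit half-twist construction used in the text. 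Second, uniqueness requires the \emph{global} injectivity of $(\tr A,\tr B,\tr AB)$ on characters of the free group of rank two (Fricke--Vogt), not merely the local parameterization you invoke: with the global statement, any two surfaces carrying Farey triples of traces $(x^*,x^*,x^*)$ have holonomies that are conjugate up to an automorphism of $\pi_1(\Q)$, hence are isometric. With these two standard facts supplied, your proof is complete.
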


\begin{proof}
The idea of the proof is to use hyperbolic polygons to obtain an
equation from which we can deduce the sharp bound.

Let $\delta$ be the shortest closed geodesic on $\Q$
with boundary geodesic $\gamma$. Let $c$ be the length of the perpendicular geodesic arc from
$\delta$ to $\delta$ on the embedded pants $(\delta,\delta,\gamma)$
obtained by cutting $\Q$ along $\delta$. By the formula for a right angled hexagon, $c$ is given by the following formula:
$$
\cosh c = \frac{\cosh \frac{\ell(\gamma)}{2} +
\cosh^2\frac{\ell(\delta)}{2}}{\sinh^2\frac{\ell(\delta)}{2}}.
$$
Another way of expressing it is using one of the four isometric
hyperbolic pentagons that form a symmetric pair of pants as in the
following figure.

%\vspace{-0.4cm}
\begin{figure}[h]
\leavevmode \SetLabels
\L(.28*.9) $\delta$\\
\L(.71*.9) $\delta$\\
\L(.48*.28) $\gamma$\\
\L(.43*.94) $\frac{c}{2}$\\
\L(.54*.94) $\frac{c}{2}$\\
\endSetLabels
%\ShowGrid
\begin{center}
\AffixLabels{\centerline{\epsfig{file = 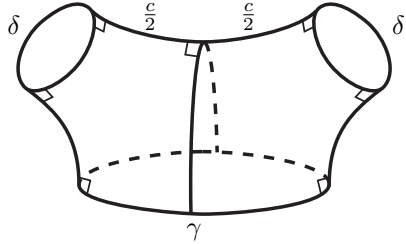, width=5cm,
angle= 0}}}
\end{center}
\vspace{-1.8cm}
\caption{A symmetric pair of pants} \label{fig:Vq1}
\end{figure}

The pentagon formula implies that
$$
\cosh^2 \frac{c}{2} = \frac{\cosh^2\frac{\ell(\gamma)}{4} +
\cosh^2\frac{\ell(\delta)}{2} - 1}{\cosh^2\frac{\ell(\delta)}{2} - 1}.
$$
The smaller $c$ is, the longer $\delta$ is. Thus to find an
upper bound on $\delta$, we need to find a minimal $c$. Consider
$\delta'$ the second shortest simple closed geodesic. Its not too difficult to see that $\delta'$ intersects
$\delta$ exactly once. For a given $\delta$ and $\gamma$, this $\delta'$
is of maximal length when $\Q$ is obtained by pasting
$\delta$ with a half twist.

The length of this maximal $\delta'$
can be calculated in the following quadrilateral.

\vspace{-1cm}
\begin{figure}[h]
\leavevmode \SetLabels
\L(.24*.20) $\frac{\ell(\delta)}{4}$\\
\L(.72*.68) $\frac{\ell(\delta)}{4}$\\
\L(.38*.13) $\frac{\ell(\delta')}{2}$\\
\L(.54*.74) $\frac{\ell(\delta')}{2}$\\
\L(.38*.55) $\frac{c}{2}$\\
\L(.59*.40) $\frac{c}{2}$\\
\endSetLabels
%\ShowGrid
\begin{center}
\AffixLabels{\centerline{\epsfig{file = 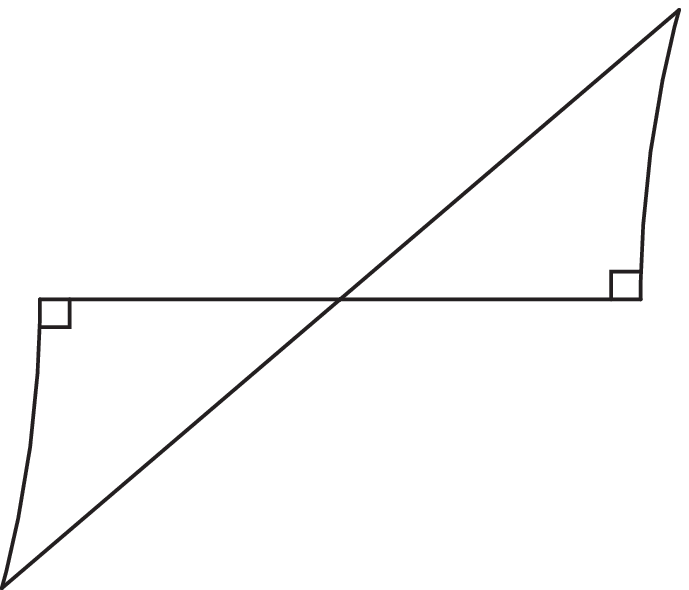, width=5cm,
angle= 0}}}
\end{center}
\vspace{-1cm}
%\caption
%\caption{Quadrilateral for a maximal $\delta$} \label{fig:Vq2}
\end{figure}

\vspace{-0.8cm}
From one of the two right-angled triangles that compose the
quadrilateral we have
$$
\cosh \frac{\ell(\delta')}{2} = \cosh \frac{c}{2}\cosh \frac{\ell(\delta)}{4}.
$$
Using the fact the $\delta\leq \delta'$ we can deduce
\begin{eqnarray*}
\cosh^2 \frac{\ell(\delta)}{2} & \leq & \cosh ^2 \frac{c}{2} \cosh^2
\frac{\ell(\delta)}{4}\\
& = & \frac{\cosh^2\frac{\ell(\gamma)}{4} + \cosh ^2
\frac{\ell(\delta)}{2}-1}{\cosh^2 \frac{\ell(\delta)}{2}-1}
\cosh^2\frac{\ell(\delta)}{4}\\
&=&
\frac{\cosh^2\frac{\ell(\gamma)}{4}+\cosh^2\frac{\ell(\delta)}{2}-1}{2(\cosh\frac{\ell(\delta)}{2}-1)}.
\end{eqnarray*}
From this we obtain the following condition:
$$
2\cosh^3\frac{\ell(\delta)}{2}-3\cosh^2\frac{\ell(\delta)}{2}+1 -
\cosh^2\frac{\ell(\gamma)}{4} \leq 0.
$$
With $x= \cosh \frac{\ell(\delta)}{2}$ and $C=
\cosh^2\frac{\ell(\gamma)}{4}>1$ we can study the following degree $3$
polynomial
$$
f(x)=2x^3-3x^2+1-C
$$
and find out when it is negative for $x>1$. The function $f$
satisfies $f(1)=-C<0$ and $f'(x)>0$ for $x>1$. The sharp condition
we are looking for is given by the unique solution $x_3$ to
$f(x)=0$ with $x>0$. Thus
$$
x'_3 = \frac{1}{2}(-1 + 2C + 2\sqrt{-C+C^2})^\frac{1}{3} +
\frac{1}{2} \frac{1}{(-1 + 2C + 2\sqrt{-C+C^2})^{\frac{1}{3}}} +
\frac{1}{2}.
$$
Now we replace $x$ and $C$ by their original values. Using
hyperbolic trigonometry we can show
$$
\cosh \frac{\ell(\delta)}{2} \leq \frac{1}{2}((\cosh
\frac{\ell(\gamma)}{2}+\sinh \frac{\ell(\gamma)}{2})^\frac{1}{3} +
(\cosh \frac{\ell(\gamma)}{2}+\sinh
\frac{\ell(\gamma)}{2})^{-\frac{1}{3}} + 1)
$$
which in turn can be simplified to
$$
\cosh  \frac{\ell(\delta)}{2} \leq \cosh \frac{\ell(\gamma)}{6}+
\frac{1}{2}.
$$
The bound is sharp and the length of $\delta$ satisfies this bound if and only if $\ell(\delta')=\ell(\delta)$ and the twist parameter $\delta$ is pasted with a half twist. \end{proof}

\begin{remark}\label{rem:modtori}For fixed boundary length, the unique surface that reaches the upper bound on systole length has exactly $3$ distinct systoles: the curves $\delta$, $\delta'$ and a third $\delta''$ which is the for instance the mirror image of $\delta'$ reflected along $\delta$. If the boundary is a cusp, then the surface obtained is in fact the modular torus mentioned previously, conformally equivalent to the torus obtained by taking a regular euclidean hexagon and identifying opposite sides. In general, if there is a boundary curve $\gamma$, if one was to glue a euclidean hemisphere by gluing the equator to $\gamma$, one would once again obtain the same conformal structure. This family of tori have different characterizations: for instance, for each boundary length they are the unique one holed tori with an isometry group of order $12$. And one could ask whether they satisfy a generalization to one holed tori of the Schmutz Schaller conjecture \ref{con:ss} mentioned earlier. In \cite{pamcmul} it is shown that there is a dense subset of these surfaces which fail to satisfy the generalization, and in \cite{pamcmuls} it is shown that if $\ell(\gamma)$ is such that $\cosh\frac{\ell(\gamma}{2})$ is a transcendental real, then it does satisfy the Schmutz Schaller conjecture.
\end{remark}

We can now explain the partial result for closed surfaces of genus 3 obtained by Schmutz Schaller and mentioned above. He shows that among all closed genus 3 surfaces with a configuration of 3 distinct systoles lying inside an embedded 1 holed tori (thus exactly like the ones explained above), the surface $M(3)$ has maximal systole. It should be said that Schmutz Schaller's detailed analysis of systole configurations are extremely useful in studying some of the synthetic geometry of Teichm\"uller and moduli spaces for low complexity surfaces. In higher complexity surfaces, there are no significantly different known ways of attacking these problems and the combinatorics of the problem become quickly out of hand.\\

To further underly the difficulty behind these problems, let us give another question which can already be found in \cite{sc98}, and which seems to wide open.

\begin{question}\label{q:growth} Is the maximal systole in genus $g+1$ greater than the maximal systole in genus $g$?
\end{question}

There is also a similar question for Riemannian surfaces which also seems to be wide open. The appropriate quantity that one studies for Riemannian surfaces is the {\it systolic ratio} and is given by
$$
\sys_g= \sup \frac{\sys^2(S)}{\area(S)}
$$
where $\sys(S)$ is the length of the shortest non-trivial closed curve on the surface $S$, and the supremum is taken among all {\it Riemannian} surfaces of genus $g$ (the notation is highly non-standard for Riemannian geometers but this is only to be able to relate the two subjects). The appropriate question here is whether $\sys_{g+1} \leq \sys_g$. Other questions and results about systolic geometry and topology can be found in \cite{kabook}.

\noindent{\it Growth of the systolic constants}\\

Systolic constants also provide an interesting growth problem. For reasons outlined in \cite{ad98,sc941}, the constants can only actually grow if one increases genus (as opposed to adding cusps). The interesting question is on how the constants $\sys(g)$ behave. For reasons outlined above, the strategy of finding the optimal constants in each genus is at the very least hopeful, so one is interested in less precise results. The rough growth was solved by Buser and Sarnak \cite{busa94} who obtained the following theorem.

\begin{theorem}[Buser-Sarnak] There exist constants $A$ and $B$ such that 
$$
A \log g < \sys(g) < B\log g.
$$
\end{theorem}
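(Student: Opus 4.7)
The plan is to prove the two inequalities by different methods: the upper bound comes from an area comparison, while the lower bound requires an arithmetic construction. For the upper bound, at every point of a closed hyperbolic surface $S$ of genus $g$ the injectivity radius is at least $\sys(S)/2$, so an embedded hyperbolic disk of that radius exists. Its area $2\pi(\cosh(\sys(S)/2)-1)$ is at most the total area $4\pi(g-1)$ of $S$ by Gauss-Bonnet, yielding
$$
\cosh\tfrac{\sys(S)}{2}\leq 2g-1,
$$
and therefore $\sys(S)\leq 2\log(4g)$. Any constant $B>2$ then works asymptotically, and absorbing finitely many small genera into the constant gives a uniform inequality.

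For the lower bound I would produce a sequence of explicit arithmetic surfaces. Start with a quaternion division algebra $\mathcal{A}$ over $\mathbb{Q}$ that is split at the archimedean place, fix a maximal order $\mathcal{O}\subset\mathcal{A}$, and let $\Gamma^1\subset\PSL_2(\R)$ be the cocompact Fuchsian group coming from the norm-one units of $\mathcal{O}$. For rational primes $p$ coprime to the reduced discriminant, form the principal congruence subgroup
$$
\Gamma(p)=\{\gamma\in\Gamma^1:\gamma\equiv 1\pmod{p\mathcal{O}}\}
$$
and let $S_p=\Gamma(p)\backslash\Hyp$. The crucial step is a trace lower bound: if $\gamma=1+p\beta\in\Gamma(p)$ with $\beta\in\mathcal{O}\setminus\{0\}$, expanding $\mathrm{Nrd}(\gamma)=1$ gives $\mathrm{Trd}(\beta)=-p\,\mathrm{Nrd}(\beta)$, so
$$
\mathrm{Trd}(\gamma)-2=-p^2\,\mathrm{Nrd}(\beta).
$$
Because $\mathcal{A}$ is a division algebra, $\mathrm{Nrd}(\beta)$ is a nonzero integer; hence $|\mathrm{Trd}(\gamma)|\geq p^2-2$. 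Together with the identity $2\cosh(\ell(\gamma)/2)=|\mathrm{Trd}(\gamma)|$ for a hyperbolic $\gamma$, this forces $\sys(S_p)\geq 4\log p+O(1)$.

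To finish I would compare $p$ to the genus $g_p$ of $S_p$. The index $[\Gamma^1:\Gamma(p)]$ is comparable to $|\PSL_2(\F_p)|$ and so grows like $p^3$, whence Gauss-Bonnet gives $\log g_p\leq 3\log p+O(1)$. Substituting into the systole bound yields $\sys(S_p)\geq(4/3-\varepsilon)\log g_p$ for $p$ large, which proves the theorem along the subsequence of genera $\{g_p\}$. To cover arbitrary $g$ I would attach a handle of controlled size to the largest $S_p$ with $g_p\leq g$ and use Lemma~\ref{lem:collar} to ensure that the systole of the enlarged surface drops by at most a constant factor, so the asymptotic rate of $\sys$ is unaffected. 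The main obstacle is the trace estimate itself: the combination of integrality of $\mathrm{Trd}(\gamma)$, the congruence $\gamma\equiv 1\pmod{p\mathcal{O}}$, and the reduced-norm normalization is precisely what produces the crucial quadratic-in-$p$ gap, and this arithmetic rigidity appears essential, since no purely geometric or combinatorial construction is known to force systole growth of order $\log g$.
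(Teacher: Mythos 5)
Your two-pronged strategy --- the Gauss--Bonnet area comparison for the upper bound, and principal congruence subgroups of a maximal order in a rational quaternion division algebra for the lower bound --- is exactly the Buser--Sarnak argument that the paper sketches, and the substantive computations are correct: the identity $\mathrm{Trd}(\gamma)-2=-p^{2}\,\mathrm{Nrd}(\beta)$ together with the integrality and nonvanishing of $\mathrm{Nrd}(\beta)$ (this is where the division algebra hypothesis enters) gives $|\mathrm{Trd}(\gamma)|\geq p^{2}-2$ and hence $\sys(S_p)\geq 4\log p+O(1)$, while $[\Gamma^{1}:\Gamma(p)]\sim|\PSL_2(\F_p)|\sim p^{3}/2$ (you should say that surjectivity of reduction mod $p$ comes from strong approximation) gives $g_p\asymp p^{3}$ and the rate $\tfrac{4}{3}\log g_p$ along the subsequence. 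The upper bound via the embedded disk of radius $\sys(S)/2$ is the standard ``trivial'' area argument the paper alludes to.

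The one genuine gap is the passage from the subsequence $\{g_p\}$ to all genera, which the statement requires. Your proposed fix --- attach $g-g_p$ handles ``of controlled size'' and invoke Lemma~\ref{lem:collar} to conclude the systole drops by at most a constant factor --- does not work as stated: the collar lemma constrains the geometry \emph{around} short geodesics, it does not prevent a surgery from creating them, and a naively attached handle has a waist curve of bounded length, which would destroy any $\log g$ lower bound. Moreover, prime gaps force $g-g_p$ to be as large as a positive power of $g$, so many handles must be attached, each engineered so that every new free homotopy class it creates has geodesic length at least a constant times $\log g$; this is possible but is an actual construction, not a corollary of the collar lemma, and it is precisely the step at which the constant $4/3$ degrades to an unspecified $A>0$. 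As written, the interpolation is asserted rather than proved; everything else in your argument is sound and matches the route described in the paper.
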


Note that by an area argument the upper bound is trivial. To show the lower bound they constructed a family of surfaces of genus $g_k$ with $g_k \to \infty$ with $k\to \infty$ and systole length with growth $\frac{4}{3} \log g_k$. Their construction relies on the use of congruence subgroups in a quaternion algebra. From their construction, you can extrapolate a full family of surfaces (i.e. a surface in every genus) with logarithmic systole growth. Since then, there have been other constructions of surfaces with large systole growth.\\

For instance in \cite{kascvi07}, Katz, Schaps and Vishne consider constructions based on congruences in other matrix groups. In particular, they show that the family of surfaces which reaches Hurwitz's bound of maximal number of automorphisms in a given genus also provides such a family (meaning a family of surfaces of genus $g_k$ with $g_k \to \infty$ with $k\to \infty$ and systole length with growth $\frac{4}{3} \log g_k$).\\

Yet another construction is essentially due to Brooks: in \cite{br99} he considers the surfaces obtained by uniformizing the surfaces coming from principal congruence subgroups of $\PSL_2 (\Z)$ mentioned above, all of them maximal in their respective signatures. Specifically, the quotients by the principle congruence subgroups give conformal structures on an underlying closed surface, and one considers the unique closed hyperbolic metric in the same conformal class. This gives a sequence of surfaces which Brooks calls the Platonic surfaces. Brooks explains how to compare surfaces with cusps with their compactifications, provided the cusps are sufficiently far apart. It seems that Brooks' theorems have not been used for this purpose, as originally the goal was to find families of surfaces with ``large" first eigenvalue of the laplacian (meaning uniformly bounded away from $0$), but Brooks' theorems imply that one only has to compute the systole in the non-compact surfaces, which is straightforward. And once again, this gives a sequence of surfaces of $g_k$ with $g_k \to \infty$ with $k\to \infty$ and systole length with growth $\frac{4}{3} \log g_k$.\\

Interestingly there seems to be a gap between the multiplicative constant in the upper and lower bounds as the best known multiplicative constant in the upper bound is 2 (and this is trivial as mentioned above). There are a number of conjectures about the exact bounds. The strongest conjecture related to this is the affirmative answer to the following question.
  
\begin{question}\label{q:asy} Does maximal systole length have asymptotic growth $\frac{4}{3} \log g$?
\end{question}

Misha Katz has called this the {\it Rodin problem}. Also note that a positive answer to this question is an extremely strong statement and implies a number of partial results, as for instance the fact that these systole functions have asymptotic growth. It also implies the existence of an upper bound with $\frac{4}{3}\log g$ behavior conjectured by Schmutz Schaller \cite{sc98}.\\

Finally, note that Schmutz Schaller was also very interested in how many distinct systoles a surface could have. He showed a number of results including the exhibition of different families of surfaces, both closed and with punctures, with number of systoles growing more than linearly in the Euler characteristic $\chi$. Specifically, the best result \cite{sc97} is that an upper bound on the number of systoles cannot grow asymptotically less than $(-\chi)^{\frac{4}{3}}$. In \cite{sc942}, he claims to show that for closed surfaces, there is an upper bound of order $g^2$, but the proof is not very convincing. Finally, he conjectures \cite{sc97} that there should be an upper bound on with growth $(-\chi)^{\frac{4}{3}}$. As systoles do not pairwise intersect more than once, one could be interested in the maximal number of simple closed geodesics with this property, but not much seems to be known.

\subsection{Bers' constants}

Another length function which has proved useful in the study of Teichm\"uller space is the length of the shortest pants decomposition of a surface. For a given surface $S\in \T_{g,n}$ and a pants decomposition $\PP$ of $S$, we define the length of $\PP$ as 
$$
\ell(\PP)=\max_{\gamma \in \PP} \ell(\gamma).
$$
We denote $\BB(S)$ the length of a shortest pants decomposition of the given surface $S$. The quantity $\BB_{g,n}$ is defined as
$$
\BB_{g,n}=\sup_{S\in \T_{g,n}} \ell(\BB(S)).
$$
This quantity is a finite quantity by a theorem of Bers \cite{be74,be85} and the constants $\BB_{g,n}$ are generally called Bers' constants. Depending on what one wants to use these results for, just the existence of the constants is good enough. For instance, it plays a crucial role in the proof of Brock's theorem \cite{br03} that Teichm\"uller space endowed with the Weil-Peterson metric is quasi-isometric to the pants complex. Building on work of Wolpert (see for instance \cite{wohand}), Brock covers Teichm\"uller space with regions corresponding to when given marked pants decompositions are short (called Bers regions in reference to Bers' constants) and then sends these regions to the topological pants decompositions. However, if one wants to say something explicit (like for instance give bounds on the quasi-isometric constants of Brock's theorem) then one needs bounds on Bers' constants. Other uses include bounds on the number of non-isometric isospectral surfaces \cite{bubook}.\\

An explicit bound can be extracted from a proof of Bers' theorem in \cite{abbook}. Buser's investigations led to a number of bounds \cite{buhab,buse92}, where best lower and upper bound for closed surfaces of genus $g$ can be found in \cite[Theorems 5.1.3, 5.1.4]{bubook}.

\begin{theorem}[Buser]\label{thm:buser}
Bers' constants satisfy $\sqrt{6g}-2 \leq \BB_{g,0} \leq 6 \sqrt{3\pi} (g-1)$.
\end{theorem}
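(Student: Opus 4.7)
The theorem splits naturally into an upper and a lower bound, which I would prove by very different techniques.

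\textbf{Upper bound.} The core is a lemma I would establish first: any compact hyperbolic surface $\Sigma$ with geodesic boundary, not itself a pair of pants, contains a non-peripheral simple closed geodesic of length bounded by an explicit function of $\area(\Sigma)$ and the boundary data. I would prove this by the classical embedded-disk argument. Pick an interior point $p$ and grow the metric ball $B(p,r)$; while the projection from the universal cover is injective on it, $\area\, B(p,r) = 2\pi(\cosh r - 1)$, so the embedding must fail at some radius $r^*\leq \arccosh(1+\area(\Sigma)/(2\pi))$. At the first self-contact two lifts of $p$ lie at distance exactly $2r^*$, producing a geodesic loop of length $\leq 2r^*$ based at $p$, which by Property~\ref{pro:unique} is freely homotopic to a simple closed geodesic of at most that length; choosing $p$ away from the thin part and from boundary collars forces this geodesic to be non-trivial and non-peripheral. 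With this lemma in hand the pants decomposition of $S$ is built inductively: extract $\gamma_1\subset S$ from the lemma, cut along it, apply the lemma in the resulting bordered surface to produce $\gamma_2$, and iterate $3g-3$ times. Since $\BB(S)$ is the \emph{maximum} (not the sum) of the $\ell(\gamma_i)$, only the worst single estimate matters; tracking how the available area $4\pi(g-1)$ and the accumulated boundary lengths interact across the iterations yields the constant $6\sqrt{3\pi}(g-1)$.

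\textbf{Lower bound.} My plan is to exhibit, for each $g$, an explicit surface $S_g\in\T_{g,0}$ on which every pants decomposition contains a curve of length at least $\sqrt{6g}-2$. The construction is a highly symmetric surface glued from $2g-2$ isometric pairs of pants, each with all three boundaries of a common length $L$, assembled along a carefully chosen trivalent graph. The base configuration gives a tautological pants decomposition of length exactly $L$, and the strategy is to take $L$ as large as possible while ruling out alternative pants decompositions made of shorter curves. Any competing system of $3g-3$ disjoint simple closed geodesics of length $\leq L'$ would, by Lemma~\ref{lem:collar}, produce pairwise disjoint collars whose total area must fit inside $\area(S_g)=4\pi(g-1)$; combining this area inequality with the seam-length identities in a symmetric pair of pants (in the spirit of the hexagon and pentagon formulas used in the proof of Proposition~\ref{prop:maxtorus}) forces $L'\geq \sqrt{6g}-2$.

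\textbf{Main obstacle.} The upper bound is essentially mechanical once the expanding-disk lemma is available; the only delicate point is ensuring that the geodesic produced at each step is non-trivial and non-peripheral in the current subsurface. The genuine difficulty lies in the lower bound, because one must verify that the symmetric construction is rigid in the much stronger sense that \emph{no} clever alternative pants decomposition — built from curves not among the chosen $3g-3$ — can have all curves shorter than $\sqrt{6g}-2$. Ruling out every such alternative requires simultaneous control of the local hyperbolic geometry inside each pair of pants and of the global combinatorics of possible dual trivalent graphs on the surface, and this is where the precise value of the constant $\sqrt{6g}-2$ is ultimately pinned down.
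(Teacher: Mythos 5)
First, a point of reference: the paper does not prove this theorem; it quotes it from Buser's book \cite{bubook} (Theorems 5.1.3 and 5.1.4 there), so your proposal can only be measured against the standard proof. Your upper bound plan is essentially the right one (the Bers--Buser inductive area argument), but the phrase ``only the worst single estimate matters'' hides where all the work is: the worst estimate is the \emph{last} curve, and bounding it requires controlling the total boundary length accumulated from all previous cuts, so the sum of the earlier lengths enters anyway. Moreover, on the bordered pieces the expanding-disk argument must be replaced by expanding half-collars of the boundary (a loop found at the first self-contact of a disk can perfectly well be peripheral, and choosing $p$ ``away from boundary collars'' does not prevent this); this is exactly the technical core of Buser's proof and of the refinement that produces the linear constant $6\sqrt{3\pi}(g-1)$ rather than something worse.

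The genuine gap is in the lower bound: the collar-area mechanism you propose cannot work, even in principle. By Lemma~\ref{lem:collar} the collars around the curves of \emph{any} pants decomposition are automatically pairwise disjoint and embedded, and the area of the collar around a curve of length $\ell$ is $2\ell\sinh w(\ell)=2\ell/\sinh(\ell/2)<4$, so the total collar area of $3g-3$ curves is less than $12(g-1)<4\pi(g-1)=\area(S)$ no matter how short the curves are. Hence assuming a competing decomposition has all curves of length $\leq L'$ never violates the area constraint, and no lower bound on $L'$ --- certainly not one growing like $\sqrt{g}$ --- can come out of it. The actual mechanism behind $\BB_{g,0}\geq\sqrt{6g}-2$ is isoperimetric and global: one builds a surface that is coarsely a round sphere of area $4\pi(g-1)$, observes that any pants decomposition cuts it into $2g-2$ pieces of area $2\pi$ so that some union of pieces has area comparable to $g$ with boundary contained in the decomposition curves, and then an isoperimetric inequality forces that boundary, hence some single curve, to have length of order $\sqrt{g}$. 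Local seam identities in a symmetric pair of pants, as in Proposition~\ref{prop:maxtorus}, cannot substitute for this, because the competing curves need not respect the chosen pants at all; and the point of the extremal example is not that its tautological decomposition is long, but that \emph{every} decomposition is, which is precisely what the isoperimetric argument delivers and your plan does not.
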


Note that the first linear upper bounds were obtained in collaboration with Sepp{\"a}l{\"a} \cite{buse92} where they also show that on a surface with a reflexion, one can choose such a pants decomposition so that it remains globally invariant by the reflexion. As in the case of the systole, one can ask about exact values and whether the ``$\sup$" in the definition above can be replaced by a ``$\max$". This is not immediate but does follow nonetheless from Mumford's compactness theorem as in the case of systoles \cite{pababers}. As it is a short example of how some of the basic tools described previously are used, a proof is provided.

\begin{property}\label{property:maxbers1}
There exists a surface $S_{\max}\in \T_{g,n}$ such that $\BB(S_{\max}) = \BB_{g,n}$. Furthermore $\sys(S_{\max})\geq s_{g,n}$ where $s_{g,n}>0$ is a constant that only depends on $g$ and $n$.
\end{property}

\begin{proof}
Given a surface $S \in \T_{g,n}$, by the collar lemma any simple closed geodesic that crosses a geodesic of length $\ell$ has length at least $2\,\arcsinh{1\over \sinh\frac{\ell}{2}}.$ A surface $S \in \T_{g,n}$ with a short enough systole $\sigma$ (shorter than a computable constant $s_{g,n}$) has the property that any simple closed geodesic that crosses its systole has length at least $2 \BB_{g,n}$. By the collar lemma again,  because $\ell(\sigma)< 2 \arcsinh 1$, all systoles of $S$, if there are several, are disjoint. Thus a shortest pants decomposition of $S$ necessarily contains all the systoles of $S$. By using the length expansion lemma explained above, one can increase the length of all the systoles at least up until $s_{g,n}$ to obtain a new surface $S'$ such that the lengths of all simple closed geodesics disjoint from the systoles increase (strictly). In particular $\BB(S') > \BB(S)$ and $\sys(S')=s_{g,n}$.  Thus we have moved to the thick part of moduli space while increasing the Bers' constant. The thick part of moduli space being compact \cite{mu71}, it suffices to find the $\sup$ for $\BB_{g,n}$ on a compact set. As $\BB$ is a continuous function over moduli space and this proves the existence of a $S_{\max}$.
\end{proof}

Along similar lines, one can show that that Bers' constants satisfy certain inequalities which one be happy to know in the case of systoles \cite{pababers}.

\begin{property}\label{property:maxbers3}
The following inequalities hold:
\begin{enumerate}[a)]
 \item $ \BB_{g,n+1}>  \BB_{g,n}$, 
\item $ \BB_{g,n}>  \BB_{g-1,n+2}$,
\item $ \BB_{g+1,n}>  \BB_{g,n}$.
\end{enumerate}
\end{property}
Thus one can ask which surfaces attain extrema, and if the corresponding surfaces have interesting geometry. Gendulphe \cite{gen08} has recently identified the exact value of $\BB_{2,0}$, and as in the case of systoles, this is the only known value for closed surfaces. 

\begin{theorem}[Gendulphe]\label{thm:bers2}
The constant $\BB_{2,0}$ is determined by $\cosh\frac{\BB_{2,0}}{12} = x_0$ where $x_0$ is the unique solution greater than $1$ of the equation 
$$
32x^5-32x^4-24x^3 + 24x^2 - 1 = 0.
$$ 
The surface that realizes this constant is unique up to isometry.
\end{theorem}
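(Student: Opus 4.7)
The plan is to follow the strategy illustrated in the proof of Proposition \ref{prop:maxtorus}: exhibit a maximally symmetric extremal surface, reduce the problem to a single real parameter governed by hyperbolic trigonometry, and read off an algebraic equation for that parameter.

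First I would invoke Property \ref{property:maxbers1} to obtain a surface $S_{\max}\in \T_{2,0}$ realizing $\BB(S_{\max})=\BB_{2,0}$, with systole bounded below. Fix a shortest pants decomposition $\PP^{\star}=\{\gamma_1,\gamma_2,\gamma_3\}$ of $S_{\max}$ and set $L:=\ell(\PP^{\star})=\BB_{2,0}$. The next step is rigidification: I would argue that at the extremum all three curves $\gamma_i$ have length exactly $L$, and that the pants decomposition is of ``theta graph'' type (three non-separating curves separating $S_{\max}$ into two pairs of pants). For the first claim, the idea is that if some $\ell(\gamma_i)<L$ then, cutting along $\PP^{\star}$ and applying the length expansion lemma to increase the boundary lengths on the short sides produces a new genus $2$ surface on which every simple closed geodesic disjoint from the expanded curves strictly increases; combined with a careful accounting of how competing pants decompositions react, this contradicts the maximality of $\BB(S_{\max})$. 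Ruling out the handcuff configuration in favor of the theta type then follows from a similar expansion argument applied across the separating curve.

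With the three lengths equalized, each pair of pants in $\PP^{\star}$ is determined up to isometry (its three boundaries all have length $L$), and the only remaining freedom is the three Fenchel--Nielsen twists. A second application of the length expansion lemma, now to the competing pants decompositions obtained by swapping a $\gamma_i$ for a transverse simple closed geodesic of the dual pants, should force the twists to take a canonical value (by symmetry and in analogy with Proposition \ref{prop:maxtorus}, I expect half twists). At that point $S_{\max}$ depends on a single parameter, namely $L$ itself, and there is a distinguished competing pants decomposition $\PP'$ whose maximum length also equals $L$; writing the length of its longest curve as an explicit function of $L$ via the right-angled hexagon and pentagon formulas yields, after simplification parallel to the cubic manipulation in Proposition \ref{prop:maxtorus}, a single-variable equation in $x=\cosh(L/12)$. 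This should be precisely $32x^{5}-32x^{4}-24x^{3}+24x^{2}-1=0$. Checking $f(1)=-1<0$ and verifying that $f'(x)>0$ throughout $(1,\infty)$ (a short computation reducing to a cubic that is already positive at $x=1$ and monotonically increasing) gives existence and uniqueness of the root $x_{0}>1$, and the rigidification yields uniqueness of $S_{\max}$ up to isometry.

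The main obstacle is clearly the rigidity step: showing that the extremal surface has the claimed symmetric form, with all three pants lengths equal and all twist parameters canonical. In genus $2$ the combinatorics of competing pants decompositions --- how many non-isotopic transverse curves to track, and how to control them simultaneously under the length expansion lemma --- is substantially richer than in the one-holed torus toy model of Proposition \ref{prop:maxtorus}, and one must also exclude extremal surfaces coming from the handcuff decomposition type. Once the rigidity is in hand, the passage to the quintic is a mechanical hyperbolic-trigonometry computation, and the natural geometric interpretation of the divisor $12$ (the extremal surface being tiled by $12$ congruent right-angled pieces with one edge of length $L/12$) provides a sanity check on the form of the equation.
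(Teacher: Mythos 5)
The paper does not prove this theorem: it is stated as a quoted result with a pointer to Gendulphe's article \cite{gen08}, so there is no internal proof to compare yours against. Judged on its own terms, your proposal is a programme rather than a proof, and the step you yourself flag as ``the main obstacle'' --- the rigidification of the extremal surface --- is exactly where it breaks. The specific problem is that the length expansion lemma is the wrong tool for the equalization step. When you cut $S_{\max}$ along $\PP^{\star}$ and expand the short boundary components, the lemma only guarantees that simple closed geodesics \emph{disjoint} from the expanded curves get longer; after regluing, every pants decomposition of the closed genus $2$ surface other than $\PP^{\star}$ itself must contain a curve transverse to some $\gamma_i$, and the lemma says nothing about those (it is likewise blind to the twist parameters you later want to pin down). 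So from ``some $\ell(\gamma_i)<L$'' you cannot conclude that the modified surface has larger $\BB$: the new surface could admit a competing pants decomposition whose maximum length drops below $L$. Contrast this with the proof of Property \ref{property:maxbers1}, where the same lemma is usable only because the collar lemma separately forces every transverse curve to be longer than $2\BB_{g,n}$, hence out of contention; you have no analogous a priori lower bound here, and without one the equalization claim, the exclusion of the handcuff type, and the determination of the twists are all unsupported.

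Beyond that, the passage to the quintic is asserted rather than derived (``this should be precisely\dots''), and it is not established --- only guessed from symmetry --- that the extremal configuration has all three pants curves of equal length. The one part of the proposal that is complete and correct is the elementary analysis of $f(x)=32x^5-32x^4-24x^3+24x^2-1$: indeed $f(1)=-1<0$ and $f'(x)=8x(20x^3-16x^2-9x+6)$ is positive on $(1,\infty)$, so the root $x_0>1$ exists and is unique. But that only shows the stated equation has a unique admissible solution; it carries none of the geometric content of the theorem.
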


Note that the surface that realizes this bound is not the Bolza curve. However, in contrast with the systolic constant case, even the rough asymptotic growth of $\BB_{g,n}$ is not known. Many of the known results are due to Buser (see \cite{bubook}). Buser has also conjectured what the rough growth should be.

\begin{conjecture}[Buser] There exists a universal constant $C$ such that $\BB_{g,n}\leq C\sqrt{g+n}$. 
\end{conjecture}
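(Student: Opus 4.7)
The plan is to attack the conjecture by induction on the complexity $\xi(S)=3g-3+n$, cutting the surface along a short balanced simple closed geodesic at each step. Given $S\in\T_{g,n}$, I would seek a simple closed geodesic $\gamma$ of length at most $C_0\sqrt{g+n}$ whose complement consists of at most two subsurfaces, each of complexity at most $(1-\alpha)\,\xi(S)$, for fixed constants $C_0>0$ and $\alpha\in(0,1/2)$. Applying the inductive hypothesis $\BB(S')\leq C\sqrt{g'+n'}$ to each piece and adjoining $\gamma$ to the resulting pants decompositions yields
$$
\BB(S)\leq C_0\sqrt{g+n}+C\sqrt{(1-\alpha)(g+n)}\leq C\sqrt{g+n},
$$
as soon as $C\geq C_0/(1-\sqrt{1-\alpha})$, which closes the induction.

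The key ingredient, which one might call a balanced cut lemma, is exactly the existence of such a short geodesic splitting the surface into topologically balanced pieces. A soft version producing only a short separator, without any complexity control, can be obtained by combining Cheeger-type isoperimetric inequalities for hyperbolic surfaces with the Gauss-Bonnet identity $\area(S)=2\pi(2g-2+n)$: on a surface in the thick part of $\T_{g,n}$ this produces a separating simple closed geodesic of length of order $\sqrt{g+n}$. The length expansion lemma together with the collar lemma then reduce the general case to the thick part, exactly as in the proof of Property \ref{property:maxbers1}: increasing the length of short systoles can only increase $\BB$, so one may assume that $S$ lies in the thick part of moduli space, where quantitative geometric estimates are uniform. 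To upgrade the soft separator to a balanced cut, one might sweep the surface by level sets of a suitable exhaustion function, such as distance to a systole, and apply an intermediate value argument to the complexity of the sub-level sets to extract a level whose boundary is short and whose interior has complexity in the required interval.

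The hardest step is precisely this upgrade from a short separator to a balanced one. A Cheeger-type cut produces a separating multicurve of the correct total length, but the two sides can be wildly unbalanced topologically, typically a tiny subsurface against a bulk carrying almost all of the genus, so the induction may fail to terminate in the $\sqrt{g+n}$ regime. A secondary difficulty occurs in the punctured regime $n\gg g$, where cusp neighborhoods absorb most of the area and the area-based isoperimetric input degenerates; one would there have to argue more combinatorially using horocyclic neighborhoods. Finally, the matching $\sqrt{g+n}$-order lower bound of Theorem \ref{thm:buser} is already produced by essentially arithmetic constructions, so the persistent gap between soft upper bounds and these lower bounds suggests that a complete resolution may require input beyond Cheeger-type geometry, possibly of an arithmetic flavor in the spirit of Buser-Sarnak.
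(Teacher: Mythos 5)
This statement is an open conjecture: the paper offers no proof of it, and explicitly says that apart from the case of punctured spheres (resolved with Balacheff, building on the diastolic inequalities of Balacheff--Sabourau \cite{basadias,pababers}) ``the general case remains wide open.'' So there is nothing in the paper to compare your argument against, and your proposal should be judged as an attempt at an open problem. As such, it contains a genuine gap --- one you partly acknowledge yourself --- and also one incorrect claim that hides where the real difficulty lies.

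The incorrect claim is that a ``soft version'' of the balanced cut lemma, producing a separating curve of length of order $\sqrt{g+n}$, follows from Cheeger-type isoperimetric inequalities together with $\area(S)=2\pi(2g-2+n)$. In hyperbolic geometry the isoperimetric profile is asymptotically \emph{linear}: a region of large area $A$ has boundary length comparable to $A$, not to $\sqrt{A}$. Consequently a Cheeger cut of a thick genus-$g$ surface typically has length of order $g+n$, which is exactly why the known upper bound of Theorem \ref{thm:buser} is linear in $g$ and why the conjecture asks for something strictly stronger. Reaching the $\sqrt{\area}$ regime is not a softening of the problem; it \emph{is} the problem, and the only known route (the sweep-outs by short one-cycles of Balacheff--Sabourau) produces families of cycles rather than simple closed geodesics and has so far only been converted into pants decompositions for punctured spheres and hyperelliptic surfaces. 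Your reduction to the thick part via the length expansion lemma and the collar lemma is fine and mirrors the proof of Property \ref{property:maxbers1}, and your recursion arithmetic closes formally, but two further issues remain even granting a balanced cut: (i) cutting along $\gamma$ produces subsurfaces with geodesic \emph{boundary} of length up to $C_0\sqrt{g+n}$, so the inductive hypothesis, stated for cusped surfaces in $\T_{g',n'}$, does not apply as written --- Bers-type constants for surfaces with boundary degrade with the boundary length, and the recursion accumulates boundary components; and (ii) a bound on the complexity $3g'-3+n'$ of the pieces does not directly give the bound on $g'+n'$ that your displayed inequality uses. In short, the skeleton of the induction is reasonable and close in spirit to what is done for punctured spheres, but the balanced cut lemma in the $\sqrt{g+n}$ regime is not a lemma anyone knows how to prove, and the isoperimetric input you propose for it cannot supply it.
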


With Florent Balacheff, we've obtained a positive answer to this question for punctured spheres \cite{pababers} building on work of Balacheff and Sabourau \cite{basadias}, but the general case remains wide open.

\bibliographystyle{plain}

\def\cprime{$'$}

\end{document}